\newtheorem{theorem}{Theorem}[section]
\newtheorem{prop}[theorem]{Proposition}
\newtheorem{lemma}[theorem]{Lemma}
\newtheorem{cor}[theorem]{Corollary}
\theoremstyle{definition}
\newtheorem{question}{Question}
\newtheorem{expl}{Example}
\newtheorem{defn}[theorem]{Definition}
\newtheorem{rmk}{Remark}
\newtheorem{claim}[theorem]{Claim}
\newcommand*{\abs}[1]{\lvert #1\rvert}
\newcommand*{\floor}[1]{\lfloor #1\rfloor}
\newcommand{\ep}{\varepsilon}
\newcommand{\ex}{\mathrm{ex}}
\newcommand{\eg}{e.g.\ }
\newcommand{\ie}{i.e.\ }
\title{Crux and long cycles in graphs}
\author{
	John Haslegrave\thanks{Mathematical Institute, University of Oxford, UK.  Email: {\tt j.haslegrave@cantab.net}. J.Ha.\ was supported by the UK Research and Innovation Future Leaders Fellowship MR/S016325/1.}
	\and
	Jie Hu\thanks{Laboratoire Interdisciplinaire des Sciences du Num\'erique, Universit\'e Paris-Saclay, France. Email: {\tt hujie@lri.fr}.}
	\and
	Jaehoon Kim
	\thanks{Department of Mathematical Sciences, KAIST, South Korea. Email: {\tt jaehoon.kim@kaist.ac.kr}. J.K. was supported by the POSCO Science Fellowship of POSCO TJ Park Foundation and by the KAIX Challenge program of KAIST Advanced Institute for Science-X.}
	\and
	Hong Liu\thanks{Extremal Combinatorics and Probability Group (ECOPRO), Institute for Basic Science (IBS), Daejeon, South Korea, Email: {\tt hongliu@ibs.re.kr}. H.L. was supported by the Institute for Basic Science (IBS-R029-C4) and the UK Research and Innovation Future Leaders Fellowship MR/S016325/1.}
	\and
	Bingyu Luan\thanks{School of Mathematics and Data Science Institute, Shandong University, China. Email: {\tt  byluan@mail.sdu.edu.cn, ghwang@sdu.edu.cn}. B.L. and G.W. were supported by Natural Science Foundation of China (11871311) and seed fund program for international research cooperation of Shandong University.}
	\and
	Guanghui Wang\footnotemark[5]
}
\begin{document}
\maketitle

\begin{abstract}
  We introduce a notion of the \emph{crux} of a graph $G$, measuring the order of a smallest dense subgraph in $G$. This simple-looking notion leads to some generalisations of known results about cycles, offering an interesting paradigm of `replacing average degree by crux'. In particular, we prove that \emph{every} graph contains a cycle of length linear in its crux.

  Long proved that every subgraph of a hypercube $Q^m$ (resp.\ discrete torus $C_3^m$) with average degree $d$ contains a path of length $2^{d/2}$  (resp.\ $2^{d/4}$), and conjectured that there should be a path of length $2^{d}-1$ (resp.\ $3^{d/2}-1$). As a corollary of our result, together with isoperimetric inequalities, we close these exponential gaps giving asymptotically optimal bounds on long paths in hypercubes, discrete tori, and more generally Hamming graphs.

We also consider  random subgraphs of $C_4$-free graphs and hypercubes, proving near optimal lower bounds on the lengths of long cycles.

\end{abstract}

\section{Introduction}
The study on the existence of long cycles in graphs has a rich history. A celebrated result of Dirac \cite{Dirac} states that every graph $G$ on $n\ge 3$ vertices with minimum degree $\delta(G)\ge n/2$ contains a Hamiltonian cycle. However, any graph satisfying Dirac's condition is dense, having $\Theta(n^2)$ edges.
A natural line of work is to consider how long a cycle we can ensure in a well-connected \emph{sparse} graph.

\subsection{Motivations}
A folklore result on cycles is that any cyclic graph $G$ contains a cycle of length linear in its average degree, i.e.~$\Omega(d(G))$. Indeed, remove low-degree vertices to obtain a subgraph $H$ with $\delta(H)\geq d(G)/2$ and then greedily extend a path to find a cycle in $H$ of length at least $\delta(H)+1$.  This linear in average degree lower bound is the best we could hope for, as the graph $G$ might be a disjoint union of cliques. It seems intuitive that better bounds can be obtained if we step away from such examples. This motivates the following notion of the \emph{crux} of a graph; it measures the order of the smallest subgraph of $G$ which retains a positive fraction of the average degree of $G$.

\begin{defn}[Crux]
	For a constant $\alpha\in(0,1)$, a subgraph $H\subseteq G$ is an $\alpha$-\emph{crux} if $d(H)\ge \alpha\cdot d(G)$. Define the $\alpha$-\emph{crux function}, $c_{\alpha}(G)$, of $G$ to be the order of a minimum $\alpha$-crux in $G$, that is,
	\[c_{\alpha}(G)=\min\{ \abs{H}: H\subseteq G\text{ and } d(H)\ge \alpha\cdot d(G)\}.\]
\end{defn}
Note that trivially we have $c_\alpha(G)>\alpha\cdot d(G)$, $c_\alpha(G)\geq c_{\alpha'}(G)$ for $\alpha\geq \alpha'$, and that if $H\subseteq G$ with $d(H)\geq d(G)/2$ then $c_{2\alpha}(H)\geq c_\alpha(G)$.

\medskip

In this paper, we investigate the following `replacing average degree by crux' heuristic.

\begin{question}\label{ques: meta-ques}
	Suppose we have a result guaranteeing the existence of a certain substructure whose size is a function of $d(G)$ (or $\delta(G)$). Under what circumstances can we replace $d(G)$ (or $\delta(G)$) with $c_\alpha(G)$?
\end{question}

Positive instances for the above question would lead to improvements on embedding problems for graph classes whose crux size is much larger than their average degree.

\setcounter{expl}{1}
\begin{expl}
	There are many natural classes of graphs having $c_\alpha(G)$ much larger than $d(G)$.
	Some specific classes are graphs with geometric structure, such as hypercubes $Q^m$ and Hamming graphs $H(m,r)$, which are  Cartesian products of $m$ complete graphs $K_r$:
	\begin{equation}\label{eq-iso}
		c_\alpha(Q^m)\ge 2^{\alpha m},\quad\quad c_\alpha(H(m,r))\ge r^{\alpha m};\footnote{See Propositions~\ref{prop: edge-isoperimetry} and~\ref{prop: discrete-torus-edge-isoperimetry-2}.}
	\end{equation}
	$K_{s,t}$-free graphs $G$ with $s,t\ge 2$, which satisfy $c_\alpha(G)\ge \frac{(\alpha d(G))^{s/(s-1)}}{2t}$ (since, by a result of K\H{o}v\'ari, S\'os and Tur\'an \cite{K-S-T}, we have $2t|H|\ge (d(H))^{s/(s-1)}$ for every $K_{s,t}$-free graph $H$ with $s,t\ge 2$); and blow-ups of $r$-regular expander graphs for a constant $r$.
\end{expl}

Let us first see an example of a positive answer to Question~\ref{ques: meta-ques}.

\begin{expl}
	A classical result of Koml\'os and Szemer\'edi \cite{K-Sz-2} and of Bollob\'as and Thomason \cite{B-Th} says that every graph $G$ contains a topological clique of order $\Omega(\sqrt{d(G)})$. This result is tight by the example of disjoint union of complete bipartite graphs. However, in upcoming work~\cite{IKL}, it is proved that every graph $G$ contains a topological clique of order $\Omega(\sqrt{c_\alpha(G)}/(\log c_\alpha(G))^{1/2+o(1)})$. Since $c_\alpha(G)=\Omega(d(G)^2)$ when $G$ is a $C_4$-free graph, this implies Mader's conjecture that $C_4$-free graphs contains topological cliques of order linear in its average degree, up to polylogarithmic factors \cite{Mader}. (Actually, Liu and Montgomery \cite{LM1} have demonstrated that Mader's conjecture is true using different tools.)
\end{expl}

From this example, we suspect that the following can be a possible philosophical answer to Question~\ref{ques: meta-ques}: replacement is possible when when `spatial constraints' (not having enough vertices) rather than `degree constraints' (not having
a vertex of sufficiently large degree) are the main obstruction to finding the desired substructure. So, for instance, crux is helpful for finding subdivisions of long cycles or large
complete graphs but not of wheels. Indeed, when finding cycles or clique subdivisions, the average degree $d(G)$ may act as a  `spatial constraint'. In other words, the extremal examples in these cases are either disjoint union of cliques $K_d$ or complete bipartite graphs $K_{d,d}$, hence there is not enough `space' to find $C_{\omega(d)}$ or $K_{\omega(\sqrt{d})}$-subdivision. However, a larger value of $c_{\alpha}(G)$ lifts up this `spatial constraint' so we can improve the result (see Theorem~\ref{thm: crux} and Example~C). On the other hand, if $d(G)$ acts as a strong `degree constraint', then this improvement might not be possible.
For an example, let $W_t$ be a wheel, which is obtained from a cycle $C_t$ by adding a new vertex adjacent to all other vertices. Indeed, using the fact that we can always find a subgraph of connectivity linear in $d(G)$ and Menger's theorem, one can always find a $W_{\Omega(d)}$-subdivision in a graph with average degree $d$. However, in this problem, as the graph $G$ could be almost regular, imposing a large crux size on $G$ does not help us to find a subdivision of $W_{\omega(d)}$. This is because $d(G)$ acts as an essential degree constraint rather than a spatial constraint. In this spirit, cycles are perfect examples to investigate Question~\ref{ques: meta-ques}, because `spatial constraints' are much more important than `degree constraints' in finding cycles as every vertex in a cycle has degree only two.

Let us consider another motivating question regarding cycles in expanders, i.e.~graphs in which vertex subsets expand to large neighbourhoods. Originally introduced for network design, expanders, apart from being a central notion in graph theory, also have close interplay with other areas of mathematics and theoretical computer science, see e.g. the comprehensive survey of Hoory, Linial and Wigderson~\cite{survey:HLW}. The type of expanders hitherto studied usually have constant expansion, \ie are linear expanders. We consider here instead expanders with sublinear expansion, introduced by Koml\'os and Szemer\'edi in the 90s \cite{K-Sz-1,K-Sz-2}. We defer the formal definition of sublinear expanders to Section~\ref{sec:sublinear-expander}. This notion of sublinear expanders has proved to be a powerful tool for embedding sparse graphs, playing an essential role in the recent resolutions of several long-standing conjectures that were previously out of reach, see \eg \cite{FKKL,H-K-L,IKL,KLShS17,LM1,LM2,LWY}.
It would therefore be useful to study these sublinear expanders.

Cycle lengths in linear expanders have been well studied, see \eg \cite{FK,Kri2}. In particular, Krivelevich~\cite{Kri2} proved that every linear expander contains a cycle of length linear in its order. What about sublinear expanders? Note that we \emph{cannot} necessarily find a linear-sized cycle, unlike the linear expander case, as the following example shows.

\begin{expl}\label{ex:imbalanced-bip-graph}
The imbalanced complete bipartite graph $K_{n,\frac{n}{\log^2n}}$ is a sublinear expander, but any cycle must take half its vertices from the smaller part, and consequently has length sublinear in the total number of vertices.
\end{expl}

However, in the case of $K_{n,\frac{n}{\log^2n}}$ we can instead consider a subexpander $H=K_{n',n'}$, where $n'=\frac{n}{\log^2n}$, which has average degree about half of $K_{n,\frac{n}{\log^2n}}$. Now this subexpander $H$ does have a cycle of length linear in the order of $H$.
Does such a phenomenon always occur? That is, is it true that if we cannot find a linear-sized cycle in a sublinear expander $G$, then we can find within $G$ a subgraph
$H$, with about the same average degree as $G$, that has a cycle of length linear in the order of $H$? We shall see shortly that this is indeed the case.

\subsection{Crux and cycles}
Our first result finds a cycle of length linear in the crux size in generic graphs, extending the aforementioned folklore result of cycles linear in average degree and giving an instance of a positive answer to Question~\ref{ques: meta-ques}.

\begin{theorem}\label{thm: crux}
Let $0<\alpha<1$. Then every graph $G$
 contains a cycle of length at least
$$\frac{1-\alpha}{16000}\cdot c_{\alpha}(G),$$
provided that a single edge is considered to be a cycle of length one.
\end{theorem}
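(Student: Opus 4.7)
The plan is to pass to a minimum $\alpha$-crux $H_0 \subseteq G$, extract a minimum-degree consequence from its minimality, and then split into cases based on the relative size of $d(G)$ and $c_\alpha(G)$. Set $k := c_\alpha(G)$ and $d := d(G)$, and take $H_0 \subseteq G$ of minimum order with $d(H_0) \ge \alpha d$, so $|H_0| = k$. If some vertex $v \in V(H_0)$ had $\deg_{H_0}(v) \le \alpha d/2$, then $d(H_0 - v) \ge \frac{\alpha d k - \alpha d}{k-1} = \alpha d$, contradicting the minimality of $|H_0|$; hence $\delta(H_0) > \alpha d/2$, and in particular $k > \alpha d$.

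Put $\gamma := (1-\alpha)/16000$. If $d \ge 2\gamma k$, the folklore argument, i.e., iteratively delete vertices of degree less than $d/2$ to obtain a subgraph of minimum degree at least $d/2$, then greedily extend a longest path, already produces a cycle of length at least $d/2 + 1 \ge \gamma k$, and we are done. The substantive case is $d < 2\gamma k$: here $H_0$ has order $k$ far exceeding its minimum degree $\alpha d/2$, so Dirac-type arguments alone cannot produce a cycle of length linear in $k$, and some form of expansion has to be used.

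To handle this remaining case, I would apply the Koml\'os--Szemer\'edi sublinear expander framework to $H_0$ to isolate a subexpander $H \subseteq H_0$ with $d(H) \ge \alpha d/2$, build a long path in $H$ by Pósa-style rotations driven by the minimum-degree and expansion bounds, and close the path into a cycle by joining its two endpoints through a short detour found via endpoint expansion. The minimality of $H_0$ feeds in as a sparsity condition on every proper induced subgraph: if $S \subsetneq V(H_0)$ with $1 \le |S| < k$, then $H_0[S] \subsetneq G$ has at most $|S| < c_\alpha(G)$ vertices and therefore $e(H_0[S]) < \alpha d |S|/2$. Used in tandem with $\delta(H_0) > \alpha d/2$, this $\alpha$-sparsity rules out the kinds of local-cluster configurations that would cost additional factors in the rotation/closing steps.

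The main obstacle is producing a cycle of length \emph{linear} in $k$ rather than $\Omega(k/\mathrm{polylog}\, k)$, which is what direct applications of sublinear expander methods typically deliver. The clean constant $(1-\alpha)/16000$ with no polylog factor signals that the proof must exploit the structural rigidity of a minimum crux, not just generic subexpander behaviour, to avoid logarithmic losses; managing the interplay between the crux-induced sparsity of proper subgraphs, the $\alpha d/2$ minimum degree of $H_0$, and the expansion used to build and close the cycle is where I expect the main technical work to lie.
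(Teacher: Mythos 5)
Your overall plan is in the right family --- pass to a sparse expander and convert expansion into a long cycle --- but there are two genuine gaps, and the second is the one you yourself flagged but could not fill.

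First, applying the Koml\'os--Szemer\'edi expander lemma \emph{to a minimum crux $H_0$} with a factor-two average-degree loss, obtaining $H\subseteq H_0$ with $d(H)\ge \alpha d/2$, destroys the only lower bound you have on the order: now you can only say $|H|\ge c_{\alpha/2}(G)$, and $c_{\alpha/2}(G)$ can be arbitrarily smaller than $c_\alpha(G)$ (e.g.\ a union of a large balanced bipartite graph with a small dense clique whose degree is about half). The paper instead applies the expander lemma to $G$ directly, choosing $\ep=(1-\alpha)/500$ and $C=40$ so small that the resulting $(\ep,t)$-expander $H$ satisfies $d(H)\ge \alpha\, d(G)$ with \emph{no loss past $\alpha$}; then $|H|\ge c_\alpha(G)$ follows immediately from the definition of crux. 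That is why no properties of a minimum crux as a subgraph are ever needed --- only the number $c_\alpha(G)$ appears, and your observations about $\delta(H_0)>\alpha d/2$ and the $\alpha$-sparsity of proper subsets, while correct, are red herrings that the proof never uses.

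Second, and more importantly, the obstacle you correctly identified --- sublinear expansion naturally yields a polylog loss --- is not overcome by ``structural rigidity of the minimum crux'' or by P\'osa rotations. It is overcome by a single clean choice of parameter: take the expander threshold to be $t=n_c/2$ where $n_c=c_\alpha(G)$. Then for a set of size $x\approx |H|/4$ the expansion amount is
\[
\rho(x)\,x \;=\; \frac{\ep\, x}{\log^2(15x/t)} \;=\; \Theta\!\left(\frac{\ep\, K n_c}{\log^2(K)}\right),\qquad K:=|H|/n_c\ge 1,
\]
and the function $K/\log^2(\Theta(K))$ is bounded below by an absolute constant for all $K\ge 1$, so the expansion is at least $\ep n_c/32$, \emph{linear} in $n_c$ with no log factor. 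This is exactly the point of Remark~3.4 about ``different expansion threshold $t$''. One then feeds this local expansion property directly into Krivelevich's theorem on long cycles in locally expanding graphs (Theorem~3.1, \cite{Kri}) --- no rotation--extension machinery is needed. Your case split on $d$ versus $2\gamma k$ is harmless but unnecessary once the threshold trick is in hand.
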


It is worth mentioning that the above statement for $\alpha<1/2$ can be deduced using a variant of the classical P\'osa's lemma~\cite{Posa} that if sets up to size $k$ expands linearly, then there is a cycle of length $\Omega(k)$. To see this, first pass to a subgraph $H$ with $\delta(H)\ge d(G)/2$; clearly $\abs{H}\geq c_{1/2}(G)\geq c_\alpha(G)$. Then every set $X\subseteq V(H)$ of size $O(c_{\alpha}(G))$ must expand linearly, for otherwise $H[X\cup N_H(X)]$ has average degree almost $d/2$ while having smaller order than $c_\alpha(G)$, a contradiction. Such argument, however, cannot push $\alpha$ beyond $1/2$ as we cannot guarantee the minimum degree of a graph to be larger than half of its average degree, see the bipartite graph in Example \ref{ex:imbalanced-bip-graph}.

\setcounter{rmk}{4}
\begin{rmk}	
	The value of Theorem~\ref{thm: crux} is that we can take $\alpha=1-o(1)$, which is needed to close the exponential gaps in the applications below, see~Corollaries~\ref{thm: weaker-version-Long's-question} and~\ref{thm: weaker-version-Long's-question2}. The idea to get the whole range $0<\alpha<1$ is to pass to an expander subgraph with different expansion threshold $t$ to have better expansions for large sets.
\end{rmk}

We have the following corollary on cycles in sublinear expanders. The bipartite graph in Example \ref{ex:imbalanced-bip-graph}, which is an $(\ep,t)$-expander for any $0<\ep\le 1$ and $t=15$, shows that both terms in the bound below are best possible up to multiplicative constants.

\begin{cor}\label{ques: cycle-expander}
	Let $0<\alpha<1$, $0<\ep\le \frac{1-\alpha}{500}$, $t\geq 1$ and suppose $n\geq 150t$. Then every $n$-vertex $(\ep,t)$-expander $G$ contains a cycle of length
	$$\max\Big\{~ \frac{\ep}{32}c_{\alpha}(G)~, ~~ \frac{\ep n}{1200\log^{2}n}~\Big\}.$$
\end{cor}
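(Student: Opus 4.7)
The plan is to produce two cycles in $G$, one for each of the two lower bounds inside the $\max$; the longer of the two then has length at least $\max\{A,B\}$, which is all that is needed.

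For the first term, the hypothesis $\ep\le (1-\alpha)/500$ rearranges to $\frac{\ep}{32}\le \frac{1-\alpha}{16000}$, so Theorem~\ref{thm: crux} applied directly to $G$ produces a cycle of length at least
\[\frac{1-\alpha}{16000}\,c_\alpha(G)\;\ge\;\frac{\ep}{32}\,c_\alpha(G).\]

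For the second term $\frac{\ep n}{1200\log^{2} n}$, which is independent of $c_\alpha(G)$, I would rely only on the sublinear expansion of $G$ and follow the classical two-step template of Koml\'os and Szemer\'edi~\cite{K-Sz-1,K-Sz-2}. \emph{Step 1: a long path.} Take a maximum path $P\subseteq G$; both of its endpoints have all their neighbours on $P$. Iterating P\'osa rotations at one endpoint generates a family $S$ of alternative endpoints, all still contained in $V(P)$. Once $|S|$ exceeds $t$ (which the condition $n\ge 150t$ ensures can happen before $|P|$ catches up to $n$), the $(\ep,t)$-expansion of $G$ applied to $S$ either extends $P$ (contradicting maximality) or enlarges $S$; iterating this forces $|P|\ge \frac{\ep n}{1200\log^{2}n}$. \emph{Step 2: close the path.} Applying P\'osa rotation at \emph{both} endpoints of such a maximum $P$ produces two large rotation families on $V(P)$; expansion between them yields a chord that closes a cycle of length $\Omega(|P|)$, and tuning the constants gives exactly $\frac{\ep n}{1200\log^{2}n}$.

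The main obstacle is the numerical bookkeeping around the $\log^{2}(\cdot)$ factor intrinsic to the definition of $(\ep,t)$-expander: the rotation argument applies expansion to endpoint families of widely varying sizes and the stated constant $\frac{1}{1200}$ leaves little slack, so the iteration thresholds have to be chosen carefully. The condition $n\ge 150t$ is precisely what keeps $\log^{2}(|S|/t)$ comparable to $\log^{2}n$ throughout the process; without it the rotation family might stall below size $t$ and no expansion could be triggered. The $\max$ formulation is natural in this light: when $c_\alpha(G)$ is small the pure-expansion bound dominates, and when $c_\alpha(G)$ is close to $n$ Theorem~\ref{thm: crux} wins, gaining the $\log^{2}n$ factor over pure expansion.
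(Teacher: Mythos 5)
Your handling of the first term is fine, and in fact slightly cleaner than the paper's: from $\ep\le(1-\alpha)/500$ you get $\ep/32\le(1-\alpha)/16000$, so a black-box application of Theorem~\ref{thm: crux} to $G$ already yields a cycle of length at least $\tfrac{1-\alpha}{16000}c_\alpha(G)\ge\tfrac{\ep}{32}c_\alpha(G)$, matching what the paper obtains by ``following the proof''.

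The second term is where there is a genuine gap.  You propose a P\'osa rotation--extension argument, but this cannot get off the ground in a sublinear expander.  The rotation lemma gives, for a longest path $P$ with a fixed endpoint and rotation-endpoint set $S$, the containment $N(S)\subseteq S^+\cup S^-$, hence $\abs{N(S)}<2\abs{S}$.  To derive anything (either that $P$ can be extended or that $S$ must keep growing) you need the graph's expansion to \emph{contradict} this, which requires $\abs{N(X)}\ge 2\abs{X}$ for the relevant sizes.  But here the expansion factor is $\rho(x)=\ep/\log^2(15x/t)$, which is maximised at $x=t/5$ where it equals $\ep/\log^2 3<\ep<1/500$.  So $\rho(x)<1$ for every admissible $x$; the expander guarantee $\abs{N(S)}\ge\rho(\abs{S})\abs{S}$ is strictly weaker than the trivial bound $\abs{N(S)}<2\abs{S}$ and the ``extend or enlarge'' dichotomy never fires.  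The bottleneck is not, as you suggest, numerical bookkeeping around the $\log^2$ factor; it is that the expansion ratio is simply never $\ge 1$, so $S$ can stall at any size and no lower bound on $\abs{P}$ follows.  (Note also that the condition $n\ge 150t$ plays no role in keeping $S$ large enough to trigger expansion --- it is there so that sets of size around $0.3n$ lie in the range where $\rho$ is defined and so that $\log(15n/t)$ and $\log n$ are comparable.)

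The paper's argument is of a different nature and sidesteps this entirely.  It runs DFS and exploits the structural fact that the stack $S$ is a \emph{separator} between the explored set $X$ and the unexplored set $U$.  Once $\abs{X}=n/3$, both $X$ and $U$ have size $\Omega(n)$, and the Small Diameter Lemma (Lemma~\ref{lem: small-diameter-lemma}) forces any separator between two such sets to have size at least $\rho(0.3n)\cdot 0.3n/4>\ep n/(1200\log^2 n)$; this gives a long stack path, and a second application of Lemma~\ref{lem: small-diameter-lemma} to a common prefix of many such stack paths (together with the observation that DFS places no edges between $X$ and $U$) closes a cycle of the required length.  Equivalently, as the paper remarks, one can invoke Krivelevich's Theorem~\ref{thm: long-cycle-in-locally-expander}, which only requires a lower bound on $\abs{N(W)}$ for sets $W$ of a \emph{single} size, not multiplicative expansion --- exactly what a sublinear expander supplies.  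If you want to keep a rotation-flavoured proof, you would need to replace P\'osa-type expansion with a separator or DFS step of this kind; as written, the proposal does not reach the second term of the maximum.
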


\subsection{Application to Long's conjecture}
Long~\cite[Conjecture 8.9]{LONG} conjectured that any subgraph of the hypercube $Q^m$ that has average degree $d$ contains a path of length at least $2^{d}-1$. He obtained a weaker bound and showed that there is a path of length at least $2^{d/2}-1$, by passing to a subgraph of minimum degree at least $d/2$. A similar conjecture for discrete tori $C^m_3$ was made in the same paper. Long proved that every subgraph of $C^m_3$ that has average degree at least $d$ contains a path of length at least $2^{d/4}-1$, and he conjectured~\cite[Conjecture 8.3]{LONG} that the correct bound should be $3^{d/2}-1$. Both conjectures, if true, would be best possible by considering sub-hypercubes or sub-torus.

Using Theorem~\ref{thm: crux} and the isoperimetric inequalities~\eqref{eq-iso}, we immediately close the above exponential gaps and settle both conjectures asymptotically. It would be interesting to see if stability methods can be combined to obtain exact results.

\begin{cor}\label{thm: weaker-version-Long's-question}
	Every subgraph of the hypercube with average degree $d$ contains a cycle of length
	$$2^{d-o(d)}.$$
\end{cor}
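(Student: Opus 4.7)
The plan is simply to chain Theorem~\ref{thm: crux} with the edge-isoperimetric bound for the hypercube recorded in~\eqref{eq-iso}, with the parameter $\alpha$ tuned to be just short of $1$. The conceptual point is that the bound $c_\alpha(Q^m)\ge 2^{\alpha m}$ is really a property of \emph{every} subgraph $H\subseteq Q^m$: Harper's edge-isoperimetric theorem gives $e(H)\le \tfrac12|H|\log_2|H|$, equivalently $|H|\ge 2^{d(H)}$. Hence the inequality from~\eqref{eq-iso} is inherited by all subgraphs of $Q^m$: if $G\subseteq Q^m$ has $d(G)=d$, then any $H\subseteq G$ with $d(H)\ge \alpha d$ also lies in $Q^m$, so $|H|\ge 2^{d(H)}\ge 2^{\alpha d}$, giving
\[ c_\alpha(G)\ \ge\ 2^{\alpha d}. \]

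Given this, I would apply Theorem~\ref{thm: crux} directly: $G$ contains a cycle of length at least
\[ \frac{1-\alpha}{16000}\cdot c_\alpha(G)\ \ge\ \frac{1-\alpha}{16000}\cdot 2^{\alpha d}. \]
To close the exponential gap I want $1-\alpha$ to tend to $0$ slowly enough that $\alpha d = d-o(d)$ while $(1-\alpha)^{-1}$ only contributes a subexponential loss. The simplest clean choice is $\alpha := 1 - d^{-1/2}$, which yields
\[ \frac{1}{16000\sqrt{d}}\cdot 2^{\,d-\sqrt{d}}\ =\ 2^{\,d-o(d)}, \]
as required. (If $d$ is bounded the statement is trivial; I would handle that edge case separately by noting that a cycle of length linear in $d$ exists via the folklore bound.)

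The whole argument is a one-line application of Theorem~\ref{thm: crux}, so the genuine obstacle is not in this corollary at all — it lies entirely in proving Theorem~\ref{thm: crux} with the essentially optimal $(1-\alpha)$ prefactor, since, as Remark~D emphasises, only access to the full range $\alpha\to 1$ (rather than the softer $\alpha<1/2$ P\'osa-style argument) converts the isoperimetric lower bound $2^{\alpha d}$ into the sought $2^{d-o(d)}$. Once that uniform crux-cycle bound is in hand, the hypercube case follows as above, and exactly the same template — replacing $2^{\alpha m}$ by $r^{\alpha m}$ via~\eqref{eq-iso} — yields the analogous asymptotic resolution for Hamming graphs and discrete tori.
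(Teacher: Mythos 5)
Your proposal is correct and is essentially the paper's own argument: both chain Theorem~\ref{thm: crux} with the isoperimetric bound $|H'|\ge 2^{d(H')}$ for $H'\subseteq Q^m$ (Proposition~\ref{prop: edge-isoperimetry}) to get $c_\alpha(G)\ge 2^{\alpha d}$, then take $\alpha\to 1$. The only cosmetic difference is that the paper fixes $\ep=1-\alpha$ arbitrarily and leaves the passage to $o(d)$ implicit, whereas you pick $\alpha=1-d^{-1/2}$ to make the $o(d)$ term explicit; this is a harmless unwinding of the same one-line proof.
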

\begin{proof}
	Fix arbitrary $0<\ep<1$ and let $H\subseteq Q^m$ be a subgraph with $d(H)= d$. By the definition of crux and~\eqref{eq-iso}, we have $c_{1-\ep}(H)\ge c_{(1-\ep)\frac{d}{m}}(Q^m)\ge 2^{(1-\ep)d}$. Then by Theorem~\ref{thm: crux}, $H$ contains a cycle of length at least $\frac{\ep}{16000}2^{(1-\ep)d}$ as desired.
\end{proof}

The same proof applies also to Hamming graphs. The case $r=3$ below covers discrete tori.
\begin{cor}\label{thm: weaker-version-Long's-question2}
	Every subgraph of the Hamming graph $H(m,r)$ with average degree $d$ contains a cycle of length
	$$r^{\frac{d}{r-1}-o(d)}.$$
\end{cor}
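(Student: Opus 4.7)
The plan is to mirror the proof of Corollary~\ref{thm: weaker-version-Long's-question} verbatim, swapping the hypercube isoperimetric bound for the Hamming graph one in~\eqref{eq-iso}. The only content in the argument is the interplay between the average degrees of $H$ and of the ambient graph $H(m,r)$.

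First I would fix an arbitrary $\ep\in(0,1)$ and let $H\subseteq H(m,r)$ be the given subgraph, with $d(H)=d$. Since $H(m,r)$ is $m(r-1)$-regular, its average degree equals $m(r-1)$. If $H'\subseteq H$ is a $(1-\ep)$-crux of $H$, then $d(H')\ge (1-\ep)d$, so viewed as a subgraph of $H(m,r)$ it is a $\beta$-crux with $\beta=(1-\ep)d/(m(r-1))$. This gives the monotonicity relation
\[
c_{1-\ep}(H)\;\ge\; c_{(1-\ep)d/(m(r-1))}\bigl(H(m,r)\bigr).
\]
Next I would plug in the Hamming graph isoperimetric estimate $c_{\alpha}(H(m,r))\ge r^{\alpha m}$ from~\eqref{eq-iso} with $\alpha=(1-\ep)d/(m(r-1))$; the factors of $m$ cancel and we are left with $c_{1-\ep}(H)\ge r^{(1-\ep)d/(r-1)}$.

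Finally, Theorem~\ref{thm: crux} applied to $H$ with parameter $1-\ep$ produces a cycle of length at least
\[
\frac{\ep}{16000}\, c_{1-\ep}(H) \;\ge\; \frac{\ep}{16000}\, r^{(1-\ep)d/(r-1)}.
\]
Since $\ep>0$ was arbitrary, sending $\ep\to 0$ slowly (say $\ep=1/\log d$) absorbs the prefactor $\ep/16000$ and the factor $r^{-\ep d/(r-1)}$ into the $o(d)$ in the exponent, giving the desired cycle of length $r^{d/(r-1)-o(d)}$.

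There is really no obstacle here; the entire content has been pushed into the isoperimetric inequality~\eqref{eq-iso} (referenced to Proposition~\ref{prop: discrete-torus-edge-isoperimetry-2}) and into Theorem~\ref{thm: crux}. The only mild subtlety is the bookkeeping that converts a $(1-\ep)$-crux of $H$ into a crux of $H(m,r)$ with the correct density parameter, and noting that the crucial feature of Theorem~\ref{thm: crux} making this work is that $\alpha$ is allowed up to $1-o(1)$ rather than just $1/2$, as emphasised in Remark~\ref{rmk: 3} (without this the argument would lose a factor of $2$ in the exponent, exactly as in Long's original minimum-degree reduction).
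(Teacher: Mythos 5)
Your proposal is correct and is essentially the proof the paper intends: the paper simply remarks that ``the same proof applies also to Hamming graphs,'' and your write-up spells out exactly that mirroring, using Proposition~\ref{prop: discrete-torus-edge-isoperimetry-2} in place of Proposition~\ref{prop: edge-isoperimetry} to get $c_{1-\ep}(H)\geq r^{(1-\ep)d/(r-1)}$ and then invoking Theorem~\ref{thm: crux}. The bookkeeping converting a $(1-\ep)$-crux of $H$ into a $(1-\ep)d/(m(r-1))$-crux of $H(m,r)$ is correct, and your observation that letting $\alpha\to 1$ (rather than stopping at $1/2$) is what avoids losing a factor of $2$ in the exponent matches the paper's stated motivation.
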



\subsection{Random subgraphs of a given graph}
Our next instances of positive answers to Question~\ref{ques: meta-ques} concern long cycles in random subgraphs of a given graph.
For a given finite graph $G$ and a real $p \in [0, 1]$, let $G_p$ be a random subgraph of $G$ obtained by taking each edge independently with probability $p$. Analysis of $G_p$ can be used to demonstrate the robustness of a graph $G$ with respect to a graph property $\mathcal{P}$, see \eg \cite{K-L-S-0,K-L-S}. If $G$ is the complete graph $K_n$, then $G_p$ is simply the Erd\H os--R\'enyi binomial random graph $G(n, p)$. We say an event happens \emph{asymptotically almost surely} (a.a.s.)\ or \emph{with high probability} (w.h.p.)\ in $G(n,p)$ if its probability tends to $1$ as $n\to\infty$.

Long paths, cycles and Hamiltonicity in $G(n, p)$ have been intensively studied, see \eg \cite{AKS1,AKS3,B1,BFF,Frieze,K-Sz-0,Kri0,LS,Posa}. In particular, Frieze \cite{Frieze} proved that for large $C$, w.h.p.\ $G(n,C/n)$ has a cycle of length at least $(1-(1-o_C(1))Ce^{-C})n$. Krivelevich, Lee and Sudakov \cite{K-L-S} extended these classical results of long paths and cycles in $G(n,p)$ to random subgraphs $G_p$, where $G$ has large minimum degree. For long cycles, they proved that given a graph $G$ with minimum degree $k$, if $pk\to \infty$, then w.h.p.\ $G_p$ contains a cycle of length at least $(1-o(1))k$.
Riordan \cite{R} subsequently gave a shorter proof, and Ehard and Joos~\cite{EJ} further improved the error term. Krivelevich and Samotij~\cite{KS14} later considered graphs without a fixed bipartite subgraph $H$; in the case of $C_4$-free $G$ with $\delta(G)\ge k$, they showed that for $p=\frac{1+\ep}{k}$, w.h.p. $G_p$ contains a cycle of length $\Omega_{\ep}(k^2)$. We give a short proof for random subgraphs of $C_4$-free graphs with $p=\omega(\frac{1}{k})$.
Note that the constant 1 below is best possible, as there are $C_4$-free graphs with minimum degree $k$ and order $(1+o(1))k^2$, see the $C_4$-free construction due to Erd\H{o}s, R\'enyi and S\'os \cite{Erdos}.

\begin{theorem}\label{thm:C4-free}
Suppose that $pk\rightarrow \infty$ as $k\rightarrow \infty$. Let $G$ be a $C_4$-free graph with minimum degree $k$.
Then w.h.p.\ $G_p$ contains a cycle of length at least $(1-o(1))k^2$.
\end{theorem}

Random subgraphs of the hypercube are also well studied, see e.g.~\cite{AKS,CDGKO, JEMS}. For hypercubes, we obtain the following near linear bound. It would be interesting to prove a linear bound. While this paper was being prepared, Erde, Kang and Krivelevich~\cite{EKK} proved Theorem \ref{thm: long-cycle-in-hypercube2} with a better error term $\Omega(\frac{2^{m}}{m^{3}\log^3m})$.

\begin{theorem}\label{thm: long-cycle-in-hypercube2}
	Let $Q^m$ be the $m$-dimensional hypercube. If $p=\frac{1+\ep}{m}$, where $\ep>0$, then w.h.p.\ $Q^m_p$ contains a cycle of length $\frac{2^{m}}{4 m^{32}} =2^{(1-o(1))m}$.
\end{theorem}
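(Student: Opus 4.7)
The plan is to combine known percolation estimates for $Q^{m}_{p}$ with our cycle-in-expander result, Corollary~\ref{ques: cycle-expander}: I shall show that w.h.p.\ $Q^{m}_{p}$ contains a sublinear expander subgraph $H$ of order at least $2^{m}/m^{C}$ for some absolute constant $C$, and then read off a cycle of length $\Omega\bigl(|H|/\log^{2}|H|\bigr)$.

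The first step is percolation-theoretic. Since $p=(1+\ep)/m$ is supercritical for bond percolation on the $m$-dimensional hypercube (the threshold is asymptotic to $1/m$ by Ajtai--Koml\'os--Szemer\'edi~\cite{AKS}), w.h.p.\ $Q^{m}_{p}$ has a unique giant component $L$ with $|L|=\Theta(2^{m})$ and $e(L)=\Theta(2^{m})$, so $d(L)$ is bounded below by a positive constant depending only on $\ep$. The second step is the expander extraction. Following a Koml\'os--Szemer\'edi-style argument I would produce an $(\ep',t)$-expander subgraph $H\subseteq L$ with $d(H)\ge d(L)/2$, for some $\ep'=\ep'(\ep)>0$ and an appropriate $t=\mathrm{poly}(m)$. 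The delicate point is that Corollary~\ref{ques: cycle-expander} requires a lower bound on $|H|$, not merely on $d(H)$; here I would invoke the edge-isoperimetric inequality~\eqref{eq-iso} for $Q^{m}$, which forces any small vertex subset of $L$ to span only few edges, so that the pruning process can destroy at most a $\mathrm{poly}(m)$ fraction of the vertices of $L$ before it stabilises at a sublinear expander. This should give $|H|\ge 2^{m}/m^{C}$.

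Plugging $H$ into Corollary~\ref{ques: cycle-expander} then produces a cycle inside $Q^{m}_{p}$ of length at least $\ep'|H|/(1200\log^{2}|H|)\ge 2^{m}/m^{C+2}$, and tuning $C$ yields the stated bound $2^{m}/(4m^{32})$. The main obstacle is the expander extraction with simultaneous size control: the Koml\'os--Szemer\'edi lemma typically guarantees only the average-degree lower bound $d(H)\ge d(L)/2$, whereas here I additionally need $|H|$ to remain within a $\mathrm{poly}(m)$ factor of $|L|$. It is the hypercube's edge-isoperimetric structure that must enter crucially to rule out the pathological scenario in which iteratively removing low-expansion pieces destroys all but a negligible fraction of $L$.
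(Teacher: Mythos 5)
Your plan has a fatal gap at the expander-extraction step, and it stems from an incorrect intuition about the giant component. After percolation at $p=(1+\ep)/m$, the giant component $L$ has $|L|=\Theta(2^m)$ but also $e(L)=\Theta(2^m)$, i.e.\ $d(L)$ is a \emph{constant} slightly above $2$. Running Lemma~\ref{lem: strong-version-expander-lemma} on $L$ gives an $(\ep',t)$-expander $H$ with $d(H)\geq (1-\delta)d(L)$, but this controls only $d(H)$; the order $|H|$ is lower-bounded only by $c_\alpha(L)$, and the crux of $L$ is $O(1)$. Indeed to have $d(H)\geq \alpha\, d(L)$ with $\alpha$ close to $1$ merely forces $H$ to have a few more edges than vertices; a tiny subgraph such as two $4$-cycles sharing an edge already achieves $d=7/3>2$. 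Your proposed fix via the isoperimetric inequality~\eqref{eq-iso} does not repair this, because that inequality only says a subgraph of $Q^m$ with average degree $d$ has at least $2^d$ vertices; when $d=O(1)$ this bound is a constant and tells you nothing. The isoperimetric structure of $Q^m$ is completely diluted after percolation to density $\Theta(1/m)$, and there is no honest reason why the Koml\'os--Szemer\'edi pruning should stop before discarding all but a bounded piece of $L$.

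The paper avoids this obstruction by not trying to extract a sublinear expander at all. Instead it proves (Claim~\ref{claim: largest component}, via a two-round exposure with a martingale concentration step and a sprinkling step) that the giant component in $Q^m_{p'}$ with $p'$ slightly below $p$ has size $\ge c\,2^m$ with \emph{exponentially} high probability $1-\exp(-2^m/m^{14})$. This sharp tail bound is then used in a union-bound argument to show that $Q^m_p$ w.h.p.\ is not $(n/m^{16},n/m^8)$-separable: any small separator in $Q^m_p$ could plausibly be deleted entirely when sprinkling down to $p'$, contradicting the concentration. Lemma~\ref{claim: separator} then converts non-separability of $Q^m_p$ into a \emph{large} subgraph $H$ with $|H|\ge n/m^8$ having no small separator, and Krivelevich's locally-expanding theorem (Theorem~\ref{thm: long-cycle-in-locally-expander}), not Corollary~\ref{ques: cycle-expander}, finishes. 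The key quantity that replaces the crux in this setting is thus ``no small separator'', which, unlike crux, can be guaranteed at near-full order even in a bounded-average-degree random subgraph; that is the ingredient your plan is missing.
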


\medskip

\noindent\textbf{Organisation.} The rest of the paper is organised as follows. Section \ref{sec:prelim} contains some necessary tools needed in our proofs. In Section \ref{sec:sublin}, we give the proofs of Theorem~\ref{thm: crux} and Corollary~\ref{ques: cycle-expander}. We prove Theorems~\ref{thm:C4-free} and~\ref{thm: long-cycle-in-hypercube2} in Section \ref{sec:cube}. Concluding remarks are given in Section \ref{sec:conc}.

\section{Preliminaries}\label{sec:prelim}
For $a,b \in \mathbb{N}$ with $a<b$, let $[a]:= \{1, \ldots, a\}$ and $[a,b]:=\{a,a+1,\ldots,b\}$. 
We use the standard Landau symbols $O, \Omega, \Theta, o, \omega$ to denote the asymptotic behavior of functions. If a hidden constant depends on some other
constant $\ep$, we write $\Omega_{\ep}(\cdot)$. In many cases, we treat large numbers as if they were integers, by omitting floors and ceilings if it does not affect the argument. We write $\log$ for the natural logarithm.

Given a graph $G$, denote its order and size by $\abs{G}$ and $e(G)$ respectively, and its average degree $2e(G)/\abs{G}$ by $d(G)$. For a vertex subset $U\subseteq V(G)$, write $N_G(U):= \{v\in V(G)\setminus U: v \text{ has a neighbour in } U\}$ for its external neighbourhood; write $\partial U$ for the edge boundary of $U$, that is, $E_G(U,V(G)\setminus U)$; and write $G-U=G[V(G)\setminus U]$ for the subgraph induced on $V(G)\setminus U$.

\subsection{Depth First Search}
We will need Depth First Search (DFS), which is a graph exploration algorithm that visits all the vertices of an input graph. It may be summarised as follows. We maintain a searching stack $S$ (initially empty), a set of unexplored vertices $U$ (initially $V(G)$), and a set of explored vertices $X$ (initially empty), as well as a spanning subgraph $F$, initially empty. At each step, if $S$ is empty but $U$ is not, remove an arbitrary vertex of $U$ and push it onto $S$. If the top vertex of $S$ has a neighbour in $U$, remove such a neighbour, push it onto $S$, and add the corresponding edge to $F$. If the top vertex of $S$ has no neighbour in $U$, then pop it from $S$ and add it to $X$. Stop when $X=V(G)$.

We will use the following straightforward properties of $S$, $U$ and $X$ which hold throughout the process.
\begin{itemize}
  \item The stack $S$ forms an induced path in $G$.
  \item There is no edge of $G$ between $U$ and $X$.
\end{itemize}

\subsection{Sublinear expanders}\label{sec:sublinear-expander}
 For $\ep > 0$ and $t > 0$, let $\rho(x)$ be the function
\begin{equation}
	\rho(x)=\rho(x,\ep,t):=
	\begin{cases}
		0 & \text{if }x<t/5,\\
		\ep/\log^2(15x/t) & \text{if }x\ge t/5,
	\end{cases}\label{eq:expander-factor}
\end{equation}
where, when it is clear from context, we will not write the dependency of $\rho(x)$ on $\ep$ and $t$. Note that when $x\ge t/2$, $\rho(x)$ is decreasing, while $\rho(x)\cdot x$ is increasing.
\begin{defn}[Sublinear expander]\label{def-sublinear-expander}
	A graph $G$ is an \emph{$(\ep,t)$-expander} if for any subset $X\subseteq V(G)$ of size $t/2\le \abs{X}\le |V(G)|/2$, we have $\abs{N_G(X)}\ge \rho(\abs{X})\cdot \abs{X}$.
\end{defn}

Compared with expanders having constant expansion factors, sublinear expanders have a weaker expansion property, but one key advantage of them is that any graph contains a sublinear expander subgraph that, furthermore, is almost as dense as the original graph, as shown by Koml\'os and Szemer\'edi \cite{K-Sz-1,K-Sz-2}. We shall use the following strengthening of their results due to Haslegrave, Kim and Liu \cite{H-K-L}.

\begin{lemma}[\cite{H-K-L}, Lemma 3.2]\label{lem: strong-version-expander-lemma}
	Let $C > 30,  0<\ep  \le 1/(10C), t>0,d>0$ and $\rho(x)=\rho(x,\ep ,t)$ as in \emph{(\ref{eq:expander-factor})}. Then every graph $G$ with $d(G) = d$ has a subgraph $H$ such that $H$ is an $(\ep ,t)$-expander, $d(H)\ge (1-\delta)d$ and $\delta(H)\ge d(H)/2$, where $\delta:=\frac{C\ep }{\log 3}$.
\end{lemma}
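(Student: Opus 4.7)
The strategy is an extremality argument in the Koml\'os--Szemer\'edi spirit, with an extra minimum-degree cleanup interleaved with the expansion step. Set $\delta := C\ep/\log 3$ and consider the family of ``dense enough'' subgraphs
\[ \mathcal{F} := \bigl\{H' \subseteq G : e(H') \geq \tfrac12\, d\, |H'|\, \psi(|H'|)\bigr\}, \]
where $\psi : \mathbb{N} \to [1-\delta, 1]$ is a slowly decreasing function, to be calibrated so that it dovetails with the expansion function $\rho$ in~\eqref{eq:expander-factor}. Since $G \in \mathcal{F}$ with $\psi(|G|) = 1$, the family is nonempty; I take $H \in \mathcal{F}$ minimising $|H|$, breaking ties by maximising $e(H)$.

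First I run a minimum-degree cleanup: while some vertex $v$ of $H$ has $\deg_H(v) < d(H)/2$, delete it. A direct calculation shows $d(H-v) \geq d(H)$, and because $\psi$ is very slowly varying, $H - v$ still lies in $\mathcal{F}$; minimality then forces the cleanup to terminate, yielding $\delta(H) \geq d(H)/2$. It remains to show $H$ is an $(\ep, t)$-expander. Suppose not: there exists $X \subseteq V(H)$ with $t/2 \leq |X| \leq |H|/2$ and $|N_H(X)| < \rho(|X|)|X|$. Writing $Y := X \cup N_H(X)$, the routine edge identity
\[ e(H[Y]) + e(H-X) = e(H) + e(H[N_H(X)]) \geq e(H), \]
combined with $|Y| + |V(H)\setminus X| = |H| + |N_H(X)| \leq (1+\rho(|X|))|H|$, says that the edge budget of $H$ is split between two strictly smaller subgraphs whose total order exceeds $|H|$ by only the tiny factor $\rho(|X|)$.

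The heart of the argument is to choose $\psi$ so that this splitting forces at least one of $H[Y]$ or $H-X$ into $\mathcal{F}$, contradicting the minimality of $H$. Setting $a := |Y|/|H|$ and $b := |V(H)\setminus X|/|H|$, simultaneous non-membership of both would give the convexity-type inequality
\[ \psi(|H|) < a\,\psi(a|H|) + b\,\psi(b|H|), \]
with $1/2 \leq b$ and $a + b \leq 1 + \rho(|X|)$. One checks, using $\rho(n) = \ep/\log^2(15n/t)$ and an explicit $\psi$ of the form $\psi(n) = 1 - \frac{C\ep}{\log 3}\cdot \frac{\log(|G|/n)}{\log|G|}$ (or a close variant thereof), that this fails for all admissible $X$, since $\rho$ is comparable to the logarithmic derivative of $\psi$. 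The main obstacle is the simultaneous calibration: $\psi$ must be slow enough that the minimum-degree cleanup is harmless (so that the potential $\mathcal{F}$ is preserved under deleting one low-degree vertex at a time), yet fast enough that the splitting inequality above is violated whenever $|N_H(X)| < \rho(|X|)|X|$, all while keeping the final loss at exactly $\delta = C\ep/\log 3$. This delicate bookkeeping is what the proof in \cite{H-K-L} achieves beyond the original Koml\'os--Szemer\'edi version.
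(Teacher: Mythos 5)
The paper does not prove this lemma itself; it is cited from \cite{H-K-L}, so there is no proof in the present paper to compare against. Your overall strategy is nonetheless the right one and closely mirrors how \cite{H-K-L} refines the Koml\'os--Szemer\'edi expander-subgraph argument: choose $H$ minimising $|H|$ subject to an order-dependent density threshold, note that minimality already forces $\delta(H)\ge d(H)/2$ (if some $v$ had $\deg_H(v)<d(H)/2$ then $H-v$ would be a strictly smaller member of $\mathcal{F}$, so no separate ``cleanup loop'' is actually required), and then derive a contradiction from the identity $e(H[Y])+e(H-X)\ge e(H)$ if some $X$ fails to expand.

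The genuine gap is the explicit choice of $\psi$, and it is not a ``close variant'' issue. Write $\psi=1-\mu$ and $c:=|N_H(X)|/|H|$; then the splitting argument requires, for all admissible $a,b,n$,
\[
a\bigl(\mu(a n)-\mu(n)\bigr)+b\bigl(\mu(b n)-\mu(n)\bigr)\ \ge\ c\,\bigl(1-\mu(n)\bigr),
\]
and when $|X|\approx n/2$ one may have $c$ as large as about $\ep/\bigl(2\log^2(15n/(2t))\bigr)$. Thus the increments $\mu(an)-\mu(n)$ must be comparable to $\rho(n)$; equivalently the discrete derivative of $\mu$ must scale like $\rho(n)/n=\ep/\bigl(n\log^2(15n/t)\bigr)$, whose antiderivative is of order $1/\log(15n/t)$. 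The correct potential is therefore of the form $\mu(n)\approx\kappa/\log(15n/t)$ for a suitable $\kappa$, exactly as in Koml\'os--Szemer\'edi. Your proposed $\psi(n)=1-\frac{C\ep}{\log 3}\cdot\frac{\log(|G|/n)}{\log|G|}$ makes $\mu$ linear in $\log n$, so its derivative behaves like $1/(n\log|G|)$ rather than $1/(n\log^2(15n/t))$. Concretely, at $|X|=n/2$ the required inequality becomes roughly $\frac{C\ep\log 2}{\log 3\,\log|G|}\ge\frac{\ep}{2\log^2(15n/(2t))}$, which fails whenever $n$ is much smaller than $|G|$ --- precisely the regime the lemma must handle. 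A secondary slip: you call $\psi$ ``slowly decreasing'' but your formula is increasing in $n$; the step $H\mapsto H-v$ preserves $\mathcal{F}$-membership only if $\psi$ is increasing in $n$, which your formula does satisfy, but the description does not match.
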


The following lemma shows the key property of sublinear expanders that we will utilise.  It roughly says that in a sublinear expander, we can connect two sets $X_1, X_2$ using a short path while avoiding another set $W$ as long as $W$ is a bit smaller than $X_1, X_2$. Although in many applications the bound on the length of such a path will be important, in this paper all we shall actually need is the existence of a path avoiding a certain set.
\begin{lemma}[{Small diameter lemma \cite[Corollary 2.3]{K-Sz-2}}]\label{lem: small-diameter-lemma}
If $G$ is an $n$-vertex $(\ep,t)$-expander, then for any two vertex sets $X_1,X_2$ each of size at least $x\ge t/2$, and a vertex set $W$ of size at most $\rho(x)x/4$, there exists a path in $G-W$ between $X_1$ and $X_2$ of length at most $\frac{2}{\ep}\log^3(\frac{15n}{t})$.
\end{lemma}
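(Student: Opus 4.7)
The plan is to prove this by a standard BFS-expansion argument, growing neighbourhoods on both sides and forcing them to meet after a logarithmic number of steps. Set $x_0:=\max(x,t/2)\geq t/2$, run BFS from $X_1$ and $X_2$ simultaneously while forbidding the set $W$, and track the sizes of the explored sets $B_i, B_i'$ defined by $B_0=X_1$, $B_i'=X_2$ and
\[
B_{i+1}=B_i\cup\bigl(N_G(B_i)\setminus W\bigr),\qquad B_{i+1}'=B_i'\cup\bigl(N_G(B_i')\setminus W\bigr).
\]
I want to show that each iteration multiplies the size by at least a factor of $1+\rho(|B_i|)/2$, and then count iterations.

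The first key step is the multiplicative growth. While $t/2\le |B_i|\le n/2$ the expander condition gives $|N_G(B_i)|\ge \rho(|B_i|)|B_i|$. The point is that $\rho(y)\cdot y$ is nondecreasing in $y$ for $y\ge t/5$ (since $\rho(y)=\ep/\log^2(15y/t)$ decreases only slowly), so $|W|\le \rho(x_0)x_0/4 \le \rho(|B_i|)|B_i|/4$, leaving at least $(3/4)\rho(|B_i|)|B_i| \ge (1/2)\rho(|B_i|)|B_i|$ neighbours outside $W$. Thus
\[
|B_{i+1}|\ge \bigl(1+\tfrac12\rho(|B_i|)\bigr)|B_i|,
\]
and the same holds for $B_i'$. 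Once $|B_i|>n/2$ we stop on that side.

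The second step is to bound the number of iterations $k$ needed before both sides exceed $n/2$. Using $\log(1+y)\ge y/2$ for small $y$, growth from $x_0$ to $n/2$ requires summing $\sum_{i}\rho(|B_i|)/4$ up to $\log(n/(2x_0))$. Approximating the sum by an integral and using $\rho(y)=\Theta(\ep/\log^2(15y/t))$, one gets $\int_{x_0}^{n/2}\frac{\log^2(15y/t)}{\ep y}\,dy=\Theta(\log^3(15n/t)/\ep)$. A routine careful calculation gives $k\le \frac{1}{\ep}\log^3(15n/t)$, so both BFS processes reach more than $n/2$ vertices (inside $V(G)\setminus W$, since $|W|\le \rho(x)x/4$ is negligible compared to $n/2$) within $k$ steps each.

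Finally, both $B_k$ and $B_k'$ lie in $V(G)\setminus W$, and each exceeds $n/2$, so they must intersect. Taking any common vertex $v$ and concatenating the BFS paths from $X_1$ to $v$ and from $v$ to $X_2$ produces a path in $G-W$ between $X_1$ and $X_2$ of length at most $2k\le \frac{2}{\ep}\log^3(15n/t)$. The main technical obstacle is the integral estimate that converts geometric growth with a $1/\log^2$ rate into a $\log^3$ bound on the number of steps; this is the standard K\'omlos--Szemer\'edi trick, and all other steps are bookkeeping.
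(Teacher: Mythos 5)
The paper does not provide its own proof of this lemma; it is quoted verbatim from Koml\'os and Szemer\'edi \cite{K-Sz-2} (their Corollary~2.3), so there is no in-paper proof to compare against. That said, your argument is precisely the standard Koml\'os--Szemer\'edi ball-growing proof of this fact and is essentially correct: grow neighbourhoods from $X_1$ and from $X_2$ while deleting $W$; use expansion together with the monotonicity of $y\mapsto\rho(y)y$ on $[t/2,\infty)$ (a one-line derivative check, since $\frac{d}{dy}\bigl(y/\log^2(15y/t)\bigr)>0$ once $\log(15y/t)>2$) to get a multiplicative gain of $1+\Theta(\rho(|B_i|))$ per step after discounting $|W|\le\rho(x)x/4\le\rho(|B_i|)|B_i|/4$; and convert the telescoping sum into $\int \log^2(15y/t)\,dy/(\ep y)$, which after the substitution $u=\log(15y/t)$ gives about $\frac{2}{3\ep}\log^3(15n/t)$ steps per side, comfortably inside the claimed $\frac{2}{\ep}\log^3(15n/t)$ total.

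Two points deserve a line of care in a fully written version. First, the statement asks for a path lying entirely in $G-W$, so if $X_1\cap W\neq\varnothing$ you cannot simply trace the BFS tree back into $X_1$: you should start the ball from $X_1\setminus W$ (and likewise $X_2\setminus W$) and make sure the first expansion step still applies, and you should arrange that the two enlarged balls meet in a vertex of $V(G)\setminus W$ (they have size more than $n/2$ each, while $|W|$ is tiny compared with $n$, so there is slack). In this paper the lemma is only ever invoked with $X_1,X_2$ disjoint from $W$, so this never bites, but the general statement needs the small adjustment. Second, the concatenation of the two BFS paths through a common vertex is a priori a walk; one should pass to a path contained in it, which does not increase the length bound. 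Both are routine; the proposal captures the correct argument.
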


\subsection{Isoperimetry}
To find long cycles in subgraphs of hypercubes and Hamming graphs, we will need the following isoperimetric result.

\begin{theorem}[{\cite[Theorem 1]{K-L}}]\label{thm: edge-isoperimetry}
	Every $U \subseteq V(Q^m)$ satisfies $\abs{\partial U}\ge \abs{U}\cdot\log_2(2^m/\abs{U})$.
\end{theorem}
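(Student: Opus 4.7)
The statement to prove is $\lvert\partial U\rvert\ge |U|\log_2(2^m/|U|)$ for every $U\subseteq V(Q^m)$. The plan is to reformulate this as an upper bound on the edge-density inside $U$ and then run a direct induction on the dimension $m$, splitting $Q^m$ along its last coordinate.

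The reformulation is immediate: since every vertex of $Q^m$ has degree $m$,
\[
\lvert\partial U\rvert \;=\; \sum_{v\in U}\bigl(m-d_U(v)\bigr) \;=\; m|U|-2e(Q^m[U]),
\]
while the right-hand side of the target inequality equals $m|U|-|U|\log_2|U|$. So the theorem is equivalent to the following edge-isoperimetric bound (with the usual convention $0\log_2 0 = 0$):
\[
2\,e(Q^m[U]) \;\le\; |U|\log_2|U| \qquad \text{for every } U\subseteq V(Q^m).
\]
The induction is on $m$. The base case $m=0$ is trivial since $e(Q^0[U])=0$.

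For the inductive step, write $Q^m = Q^{m-1}\square K_2$, identify its vertex set with $V(Q^{m-1})\times\{0,1\}$, and set $U_i := \{v\in V(Q^{m-1}) : (v,i)\in U\}$ for $i\in\{0,1\}$, with $a:=|U_0|$ and $b:=|U_1|$, so $|U|=a+b$. Every edge of $Q^m[U]$ lies in one of the two copies of $Q^{m-1}$ or is a cross edge $(v,0)$--$(v,1)$ with $v\in U_0\cap U_1$, giving
\[
e(Q^m[U]) \;=\; e(Q^{m-1}[U_0]) + e(Q^{m-1}[U_1]) + |U_0\cap U_1|.
\]
Applying the inductive hypothesis to $U_0$ and $U_1$ and using $|U_0\cap U_1|\le\min(a,b)$, it suffices (assuming $a\le b$) to verify the elementary inequality
\[
\tfrac12 a\log_2 a + \tfrac12 b\log_2 b + a \;\le\; \tfrac12(a+b)\log_2(a+b).
\]
Setting $x:=a/(a+b)\in[0,\tfrac12]$, this reduces to the one-variable statement $H(x)\ge 2x$, where $H(x)=-x\log_2 x-(1-x)\log_2(1-x)$ is the binary entropy.

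The main obstacle (and really the only substantive computation) is this last inequality, but it is straightforward: $f(x):=H(x)-2x$ vanishes at $x=0$ and $x=\tfrac12$, and its derivative $f'(x)=\log_2\!\tfrac{1-x}{x}-2$ has the unique zero $x=\tfrac15$ in $(0,\tfrac12)$; since $f'$ is strictly decreasing, $f$ is increasing on $[0,\tfrac15]$ and decreasing on $[\tfrac15,\tfrac12]$, so $f\ge 0$ throughout $[0,\tfrac12]$. This closes the induction and hence the theorem.
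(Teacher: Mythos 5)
The paper states Theorem~\ref{thm: edge-isoperimetry} as a citation to Keevash--Long (it is the classical edge-isoperimetric inequality for the hypercube, going back to Harper, Lindsey and Bernstein) and does not give a proof; you have supplied one, and it is correct. Your reformulation $\lvert\partial U\rvert = m\lvert U\rvert - 2e(Q^m[U])$ is exactly the manipulation the paper itself uses to pass from Theorem~\ref{thm: edge-isoperimetry} to Proposition~\ref{prop: edge-isoperimetry}, so proving the edge-count form $2e(Q^m[U])\le\lvert U\rvert\log_2\lvert U\rvert$ is a natural choice. The induction on dimension is the standard argument: the decomposition $Q^m=Q^{m-1}\,\square\,K_2$ gives $e(Q^m[U])=e(Q^{m-1}[U_0])+e(Q^{m-1}[U_1])+\lvert U_0\cap U_1\rvert$, the cross-term is bounded by $\min(a,b)$, and after normalising by $\lvert U\rvert$ everything collapses to $H(x)\ge 2x$ on $[0,\tfrac12]$, which you verify correctly (one could alternatively note that $H$ is concave and agrees with the line $2x$ at $x=0$ and $x=\tfrac12$, avoiding the derivative computation). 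The base case and the $a=0$ degenerate case are handled by the convention $0\log_2 0=0$ as you indicate. Since the paper offers no proof of its own, there is no internal argument to compare against; your argument is a clean, self-contained proof of the cited theorem.
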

The bound on the order of a subgraph of $Q^m$ with average degree $d$ in (\ref{eq-iso}) then immediately follows.
\begin{prop}\label{prop: edge-isoperimetry}
	Every subgraph $G$ of $Q^m$ with average degree $d$ has at least $2^d$ vertices.
\end{prop}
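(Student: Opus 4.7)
The plan is to relate the average degree of $G$ to the edge boundary of its vertex set in the ambient hypercube, and then invoke Theorem~\ref{thm: edge-isoperimetry} to obtain a lower bound on $|V(G)|$. Write $U=V(G)$ and $d=d(G)=2e(G)/|U|$.

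First I would use that $Q^m$ is $m$-regular. Counting edges incident to $U$ in $Q^m$ in two ways gives
\[
m|U| \;=\; 2e(Q^m[U]) + \abs{\partial U},
\]
where $\partial U$ is taken inside $Q^m$. Since $G\subseteq Q^m$ has vertex set $U$, we have $e(G)\le e(Q^m[U])$, and therefore
\[
\abs{\partial U} \;=\; m|U| - 2e(Q^m[U]) \;\le\; m|U| - 2e(G) \;=\; (m-d)|U|.
\]

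Next, applying the isoperimetric inequality (Theorem~\ref{thm: edge-isoperimetry}) to $U$ yields
\[
\abs{\partial U} \;\ge\; |U|\log_2\!\bigl(2^m/|U|\bigr) \;=\; (m - \log_2|U|)\,|U|.
\]
Combining the two inequalities and cancelling $|U|$ gives $m-\log_2|U|\le m-d$, i.e.\ $\log_2|U|\ge d$, which is exactly $|U|\ge 2^d$.

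There is no real obstacle here: the argument is a one-line rearrangement once the isoperimetric inequality is in hand. The only thing to be careful about is that $G$ need not be an induced subgraph of $Q^m$, but the inequality $e(G)\le e(Q^m[U])$ handles this automatically, since dropping edges only decreases the average degree while leaving $|\partial U|$ and the ambient bound unaffected.
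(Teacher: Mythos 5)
Your proof is correct and takes essentially the same approach as the paper: apply Theorem~\ref{thm: edge-isoperimetry} to $U=V(G)$ and combine with the $m$-regularity count $m|U|=2e(Q^m[U])+|\partial U|$ to solve for $|U|$. You are in fact slightly more careful than the paper, which writes the handshake identity directly with $|E(G)|$ (implicitly treating $G$ as induced); your observation that $e(G)\le e(Q^m[U])$ cleanly covers the non-induced case.
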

\begin{proof}
	By Theorem \ref{thm: edge-isoperimetry}, $\abs{\partial V(G)}\ge \abs{G}\cdot\log_2(2^m/\abs{G})$. Since $2\abs{E(G)}+\abs{\partial V(G)}= m\abs G$, we have $\abs{E(G)}=d\cdot\abs{G}/2\le \abs{G}\cdot\log_2\abs{G}/2$. Hence, $\abs{G}\ge 2^d$.
\end{proof}

A similar result for Hamming graphs holds.

\begin{prop}[{\cite[Proposition 2]{Q-T-V}}]\label{prop: discrete-torus-edge-isoperimetry-1}
	Every subgraph $G$ of the Hamming graph $H(m,r)$ has at most $(r-1)\abs{G}\cdot\log_r\abs{G}/2$ edges.
\end{prop}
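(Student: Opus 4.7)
The plan is to derive Proposition \ref{prop: discrete-torus-edge-isoperimetry-1} from an edge-isoperimetric inequality for the Hamming graph, exactly mirroring the hypercube route from Theorem \ref{thm: edge-isoperimetry} to Proposition \ref{prop: edge-isoperimetry}. Concretely, I would first establish that for every $U \subseteq V(H(m,r))$,
\[
	\abs{\partial U} \geq (r-1)\abs{U}\log_r\bigl(r^m/\abs{U}\bigr).
\]
Once this is in hand, the proposition is immediate: $H(m,r)$ is $m(r-1)$-regular, so applying the inequality with $U = V(G)$ gives $2e(G) \leq m(r-1)\abs{U} - \abs{\partial U} \leq (r-1)\abs{U}\log_r \abs{U}$, and $G$ has at most as many edges as the induced subgraph on $V(G)$.

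I would prove the edge-isoperimetric claim by induction on $m$. The base case $m=1$ is $K_r$, where $\abs{\partial U} = k(r-k)$ for $k = \abs{U}$, and the target becomes $r-k \geq (r-1)\log_r(r/k)$ for $1 \leq k \leq r$; the difference of the two sides is a strictly concave function of $k$ that vanishes at $k=1$ and $k=r$, hence is non-negative on $[1,r]$.

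For the inductive step I would partition $V(H(m,r)) = V_1 \sqcup \cdots \sqcup V_r$ according to the last coordinate (so each $V_j$ is a copy of $H(m-1,r)$), set $n_j = \abs{U \cap V_j}$ and, for each $x \in [r]^{m-1}$, $u_x = \abs{\{j : (x,j) \in U\}}$; note $\sum_j n_j = \sum_x u_x = n := \abs{U}$. The boundary then splits into the within-slice contributions $\abs{\partial_{V_j}(U \cap V_j)}$ plus the $K_r$-line contributions $\sum_x u_x(r-u_x)$. Bounding the former by induction and the latter by the base case, and cancelling the common $(r-1)mn$ terms, the required inequality collapses to
\[
	\sum_j n_j \log_r n_j + \sum_x u_x \log_r u_x \leq n \log_r n.
\]
Dividing by $n$ and writing $p_j = n_j/n$, $q_x = u_x/n$, this is exactly the Shannon subadditivity $H_r(X,J) \leq H_r(X) + H_r(J)$ applied to $(X,J)$ uniform on $U$: the marginal of the last coordinate is $p$, the marginal of the first $m-1$ coordinates is $q$, and $H_r(X,J) = \log_r n$. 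The main obstacle is the combinatorial bookkeeping merging the within-slice and cross-slice boundary contributions against the target; once the leftover is recognised as entropy subadditivity, everything collapses.
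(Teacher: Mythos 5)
The paper does not prove this proposition: it is cited verbatim from Squier, Torrence and Vogt~\cite{Q-T-V}, just as Theorem~\ref{thm: edge-isoperimetry} is cited from~\cite{K-L}. So there is no in-paper argument to compare against, and the relevant question is simply whether your proposed proof is sound.

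It is. Your deduction step is exactly right and mirrors the proof of Proposition~\ref{prop: edge-isoperimetry}: $H(m,r)$ is $m(r-1)$-regular, so for $U=V(G)$ one has $2e(H(m,r)[U])+\abs{\partial U}=m(r-1)\abs U$, and with $\abs{\partial U}\ge (r-1)\abs U\log_r(r^m/\abs U)$ this gives $2e(G)\le 2e(H(m,r)[U])\le (r-1)\abs U\log_r\abs U$. Your base case is correct: $k\mapsto (r-k)-(r-1)\log_r(r/k)$ vanishes at $k=1$ and $k=r$ and has second derivative $-(r-1)/(k^2\ln r)<0$, hence is non-negative on $[1,r]$ (and trivially so at $k=0$). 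Your inductive step also checks out: writing $\abs{\partial U}=\sum_j\abs{\partial_{V_j}(U\cap V_j)}+\sum_x u_x(r-u_x)$ and substituting the inductive and base-case bounds, the $(r-1)mn$ terms on both sides cancel and the residual is precisely $\sum_j n_j\log_r n_j+\sum_x u_x\log_r u_x\le n\log_r n$, which is Shannon subadditivity $H(X,J)\le H(X)+H(J)$ for a uniform point of $U$ with $J$ the last coordinate and $X$ the rest. The one thing worth making explicit when writing this up is the convention $0\log 0=0$ in both the base case (for $k=0$) and the subadditivity step (for slices $V_j$ or lines $\{x\}\times[r]$ that miss $U$); with that convention everything is watertight. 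What you have produced is a clean self-contained proof of a result the paper imports as a black box, and as a bonus it isolates the Hamming-graph analogue of Theorem~\ref{thm: edge-isoperimetry}, which is exactly the generality needed to make Propositions~\ref{prop: edge-isoperimetry} and~\ref{prop: discrete-torus-edge-isoperimetry-2} uniform.
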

Consequently, in such a graph $d(G)\leq (r-1)\log_r\abs{G}$, giving the following corollary.
\begin{prop}\label{prop: discrete-torus-edge-isoperimetry-2}
	Every subgraph $G$ of $H(m,r)$ with average degree $d$ has at least $r^{\frac{d}{r-1}}$ vertices.
\end{prop}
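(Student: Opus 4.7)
The plan is to derive this immediately from Proposition~\ref{prop: discrete-torus-edge-isoperimetry-1}, in exact parallel to how Proposition~\ref{prop: edge-isoperimetry} was derived from Theorem~\ref{thm: edge-isoperimetry}. No new structural property of the Hamming graph is needed; the content is already packaged into the edge bound of the previous proposition.

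Concretely, I would start from the definition of average degree, $d=d(G)=2e(G)/\abs{G}$, which gives $e(G)=d\abs{G}/2$. Feeding this into the upper bound $e(G)\le (r-1)\abs{G}\log_r\abs{G}/2$ supplied by Proposition~\ref{prop: discrete-torus-edge-isoperimetry-1} and cancelling the common factor of $\abs{G}/2$, I obtain $d\le (r-1)\log_r\abs{G}$. Rearranging yields $\log_r\abs{G}\ge d/(r-1)$, that is, $\abs{G}\ge r^{d/(r-1)}$, as claimed.

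There is essentially no obstacle: the entire argument is a one-line manipulation, and the only thing to check is that the bookkeeping of the $(r-1)$ factor is correct (which it is, since a vertex of the Hamming graph $H(m,r)$ has degree $m(r-1)$, so the edge-isoperimetric constant naturally scales with $r-1$). The only small sanity check worth doing is that the formula reduces correctly in the hypercube case $r=2$, where it gives $\abs{G}\ge 2^d$, matching Proposition~\ref{prop: edge-isoperimetry}.
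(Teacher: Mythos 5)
Your argument is correct and is exactly the paper's: it reads $d(G)\le(r-1)\log_r\abs{G}$ off Proposition~\ref{prop: discrete-torus-edge-isoperimetry-1} and rearranges, precisely mirroring the derivation of Proposition~\ref{prop: edge-isoperimetry} from Theorem~\ref{thm: edge-isoperimetry}.
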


\section{Cycles of length linear in crux}\label{sec:sublin}
\subsection{Proof of Theorem~\ref{thm: crux}}
\begin{theorem}[{\cite[Theorem 1]{Kri}}]\label{thm: long-cycle-in-locally-expander}
	Let $k>0, t\ge 2$ be integers. Let $G$ be a graph on more than $k$ vertices, satisfying:
	\[|N_G(W)|\ge t,~\text{for every}~W\subseteq V(G)~\text{with}~k/2\le |W|\le k.\]
	Then $G$ contains a cycle of length at least $t+1$.
\end{theorem}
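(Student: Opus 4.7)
The plan is to use P\'osa's rotation--extension method. Assume for contradiction that every cycle in $G$ has length at most $t$, and take $P = v_0 v_1 \cdots v_\ell$ a longest path in $G$. Maximality forces $N_G(v_0) \subseteq V(P)$, and the no-long-cycle assumption further confines each neighbour $v_j$ of $v_0$ to $j \leq t-1$, for otherwise the cycle $v_0 v_1 \cdots v_j v_0$ would already have length $\geq t+1$, a contradiction.

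Iteratively construct the set $L$ of ``reachable first endpoints'' by closing $\{v_0\}$ under P\'osa rotations that keep $v_\ell$ fixed as the second endpoint: whenever $u \in L$ has a canonical longest path $P_u$ from $u$ to $v_\ell$ and is adjacent in $G$ to some $w$ at position $\geq 2$ along $P_u$, add the predecessor of $w$ on $P_u$ to $L$. The standard P\'osa rotation lemma then gives $N_G(L) \subseteq V(P)$ together with the upper bound $|N_G(L) \setminus L| \leq 2|L|$. Applying the per-endpoint analysis to each $u \in L$ and its canonical path $P_u$ also yields $\deg_G(u) \leq t-1$, since any neighbour of $u$ at position $\geq t$ on $P_u$ would already close a cycle of length $\geq t+1$.

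The crux is to arrange $|L| \in [k/2, k]$ so that the expansion hypothesis $|N_G(L)| \geq t$ applies. If $|L|$ falls naturally into this range, apply expansion directly; if $|L| > k$, restrict to a subset of size in the window which is compatible with the rotation closure; and if $|L| < k/2$, use the assumption $|V(G)| > k$ to enlarge $L$ by adjoining a carefully chosen subset of $V(G) \setminus L$ (for instance, the immediate predecessors and successors of $L$ along the canonical paths $\{P_u\}$), bringing the combined size into the window at the cost of only a small enlargement of the neighbourhood. Once a lower bound $|N_G(\cdot)| \geq t$ is secured, combining it with the P\'osa upper bound and the per-vertex constraint forces some $u \in L$ to have a neighbour far enough along its canonical path $P_u$ to close a cycle of length $\geq t+1$, contradicting our standing assumption.

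The main obstacle is the last step: the three bounds $|N_G(L)| \geq t$, $|N_G(L) \setminus L| \leq 2|L|$, and $\deg_G(u) \leq t - 1$ do not by themselves force a long cycle, only a long path. The technical heart of the argument will be to exploit the simultaneous family of canonical paths $\{P_u\}_{u \in L}$ --- showing that the ``short neighbour prefixes'' of these paths cannot all squeeze into a small region of $V(P)$ --- so as to pinpoint an edge from some $u \in L$ to a vertex at position $\geq t$ on $P_u$, finally closing a cycle of the promised length.
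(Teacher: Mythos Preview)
The paper does not give its own proof of this statement: Theorem~\ref{thm: long-cycle-in-locally-expander} is quoted verbatim from Krivelevich~\cite{Kri} and used as a black box. So there is no in-paper argument to compare against; the relevant comparison is with Krivelevich's original proof, which is \emph{not} via P\'osa rotation but via the DFS tree. The idea there is to take a DFS forest $T$ of $G$, locate a vertex $v$ such that a suitable union $W$ of subtrees hanging below $v$ has $k/2\le |W|\le k$ (this is always possible since $|V(G)|>k$), and then observe that every $u\in N_G(W)$ must be a proper ancestor of $v$ at tree-distance less than $t$ from some $w\in W$; as $w$ lies at depth at least $d(v)+1$, this forces $d(u)\in\{d(v)-t+2,\ldots,d(v)\}$, so $|N_G(W)|\le t-1$, contradicting the hypothesis. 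The point is that the no-long-cycle assumption is used to produce an \emph{upper} bound on $|N_G(W)|$ for a carefully chosen $W$, which is then played off against the \emph{lower} bound in the hypothesis.

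Your proposal runs the inequalities the wrong way round and, as you yourself flag, does not close. Concretely: the ``arrange $|L|\in[k/2,k]$'' step does not work as written. If $|L|>k$ and you pass to a subset $L'$, you retain the hypothesis $|N_G(L')|\ge t$, but you lose the P\'osa structure, so there is nothing to contradict. If $|L|<k/2$ and you adjoin predecessors/successors to reach the window, you lose all control on the neighbourhood of the enlarged set. Even in the favourable case $k/2\le|L|\le k$, the three facts you list --- $|N_G(L)|\ge t$, $|N_G(L)\setminus L|\le 2|L|$, and $\deg_G(u)\le t-1$ for $u\in L$ --- are mutually consistent (e.g.\ when $t\ll k$), so no contradiction arises, and there is no mechanism in your outline that locates an edge from some $u\in L$ to a vertex at distance $\ge t$ on $P_u$. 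The ``squeeze the short prefixes'' heuristic in your final paragraph is not an argument; the canonical paths $P_u$ are reorderings of $V(P)$, and their initial segments of length $t$ can genuinely occupy very different portions of $V(P)$, so there is no common ``small region'' to overflow. If you want to rescue a rotation-based approach you would at minimum need a separate argument that the longest path has $m\ge t+1$ vertices and then a double-rotation argument to close it into a cycle of length $m$; but the cleanest route is the DFS-subtree argument above.
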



\begin{proof}[Proof of Theorem~\ref{thm: crux}.]
	Let $\delta=1-\alpha$ and take $C=40,\ep = \frac{\delta}{500}$, so $\delta>\frac{C\ep }{\log 3}$. Write $n_c=c_{\alpha}(G)$ and let $H\subseteq G$ be a subgraph that is an $(\ep , n_c/2)$-expander, guaranteed by Lemma~\ref{lem: strong-version-expander-lemma}. Then $d(H)\ge (1-\delta)d(G)$, by the definition of the crux, we have $n_H:=|H|\ge n_c$. Set $K=\frac{n_H}{n_c}\ge 1$.
	
	As $\rho(x)x$ is increasing in $x$ and $K\ge 1$, by the expansion property of $H$, every set of size $n_H/4\le x\le n_H/2$ has an external neighbourhood of size at least $$\rho\Big(\frac{n_H}{4}\Big)\frac{n_H}{4}=\frac{\ep n_H}{4\log^2(\frac{15n_H/4}{n_c/2})}=\frac{\ep Kn_c}{4\log^{2}(15K/2)}\ge \frac{\ep}{32}\cdot n_c.$$
	We may assume that $\frac{\ep}{32}\cdot n_c\ge 2$, for otherwise we can take a single edge as a degenerate cycle. Then by Theorem~\ref{thm: long-cycle-in-locally-expander}, the graph $H$, hence also $G$, contains a cycle of length at least $\frac{\ep}{32}n_c=\frac{1-\alpha}{16000}c_{\alpha}(G)$.
\end{proof}

\subsection{Proof of Corollary~\ref{ques: cycle-expander}}
A cycle of length $\frac{\ep}{32}c_{\alpha}(G)$ follows from the proof of Theorem~\ref{thm: crux}. The second term $\ep n/(1200\log^2 n)$ follows from the expansion property of sublinear expanders and Theorem~\ref{thm: long-cycle-in-locally-expander}, since any set of size between $n/4$ and $n/2$ has a neighbourhood of size at least $\ep n/(4\log^2(15n/t))$. We give a direct proof for completeness.

First, as $\ep< 1/500$, the conditions on $n$ imply that $n/300\geq t/2$, that $\ep n/(1200\log^2 n)\leq (n/300)\cdot\rho(n/300)/4$, and that $\ep n/(1200\log^2 n)\leq n/300$.

Consequently, if there is a path of length $n/100$, then we are done, because after removing the middle $\ep n/(1200\log^2n)$ vertices of the path, there is still a short path avoiding the middle part connecting the two halves by Lemma \ref{lem: small-diameter-lemma}.
This gives a cycle containing the middle $\ep n/(1200\log^2n)$ vertices of the path. So assume that such a path does not exist.

We run DFS until some point where $\abs{X}=n/3$. Since the stack $S$ always induces a path in $G$, we have $\abs{S}<n/100$, and so $\abs{U}>0.65n$. By Lemma \ref{lem: small-diameter-lemma} and the fact that $S$ is a cut between $X$ and $U$, we have $\abs S>0.3n\cdot\rho(0.3n)/4>\ep n/(1200\log^2n)$. Let $P_1$ be the path induced by $S$ at that point and set $i=2$. Now continue running DFS. Whenever a new vertex is added to $S$, call the new path $P_i$ and increment $i$. Do this until $i=n/3$. By the same reasoning throughout this process we have $\ep n/(1200\log^2n)<\abs{S}<n/100$, and in particular the lower bound implies the first $\ep n/(1200\log^2n)$ vertices of the path never change. Thus we have a set of $n/3$ paths with a long common first section and different endpoints.

Now consider the largest common first section $P$. This corresponds to the point between $P_1$ and $P_{n/3}$ where $S$ is smallest (and equals $P$). Fix $X$ and $U$ corresponding to their values at that point. Again, $P$ is a cut between $X$ and $U$, both of which have size at least $0.32n$. 
Let $P'$ be the subpath of $P$ consisting of the final $\ep n/(1200\log^2n)$ vertices, and $u$ be the same endpoint of $P'$ and $P$. Since $\abs{P}=\abs{S}>\ep n/(1200\log^2n)$, we have $V(P)\setminus V(P')\neq \varnothing$.

Suppose without loss of generality (if not, exchange $X$ and $U$) more than half of the paths $P_1,\ldots,P_{n/3}$ come before this point. This means their endpoints are in $X$; let $Y$ be the set of these endpoints, giving $\abs{Y}\ge 0.16n$.
For any vertex in $Y$, there is a path to $u$ which lies entirely in $X$. Let $Z=U \cup V(P)\setminus V(P')$. Then $Z$ has size more than $0.32n>t/2$. By Lemma \ref{lem: small-diameter-lemma}, there exists a short path in $G-V(P')$ connecting $Y$ and $Z$. Indeed, as there are no edges between $U$ and $X$, the short path connects $Y$ and $V(P)\setminus V(P')$. This gives a cycle containing $P'$ with desired length.

\section{Random subgraphs}\label{sec:cube}

\subsection{Long cycles in random subgraphs of \texorpdfstring{$C_4$}{C4}-free graphs}

We prove Theorem \ref{thm:C4-free} by adapting Riordan's proof \cite{R}.
Recall that $G$ is an $n$-vertex $C_4$-free graph with minimum degree $k$.
Fix $0<\ep <1/10$ and let $C= 10/\ep$. It suffices to show that w.h.p.\ $G_p$ contains a cycle of length at least $(1-20\ep)k^2$ when $pk=\omega(1)$.

Consider a DFS forest $T$ of $G_p$, leaving edges \emph{unrevealed} if they are not needed in the exploration. To be precise, when checking whether the top vertex $v$ of the stack has a neighbour in $U$, we list the remaining edges between $v$ and $U$ (in an arbitrary order) and reveal whether each in turn is in $G_p$ until either we find such an edge or exhaust the list. If an edge $vw$ is found, then we add it to the forest, put $w$ on the top of the stack, and repeat. (While the final forest found is an undirected graph, we also think of edges being associated with an orientation, so that the edge $vw$ just added is oriented from $v$ to $w$; taking these orientations into account makes each component an arborescence.)
If the list is exhausted, we remove the vertex $v$ from the stack and consider the next vertex on top of the stack to repeat.  Note that a vertex is removed from the stack only when no incident edges to $U$ remain (either because they have been revealed or because vertices have been removed from $U$).

We consider each component of the obtained forest $T$ to be rooted at the first vertex to be added to the stack $S$ (that is, the natural root of the associated arborescence), and we consider the set $D(v)$ of \textit{descendants} of a vertex $v$ to be the set of vertices $w$ such that the path from $w$ to the root of its component contains $v$ (note in particular that $v\in D(v)$). Likewise we consider $v$ to be an \textit{ancestor} of $w$ if $w\in D(v)$. For a non-root vertex $v$ of $T$, the neighborhood $N_T(v)$ consists of one ancestor of $v$ called \textit{the parent} of $v$ and possibly some descendants of $v$ called \textit{children} of $v$.

We write $n$ for the order of $G$ and $Q\subseteq G$ for the subgraph consisting of all unrevealed edges. Throughout the process, each edge in $Q$ is present in $G_p$ independently with probability $p$; in particular this means that for any given set of $\ep k$ edges of $Q$, w.h.p.\ at least one is present since $\ep kp\rightarrow \infty$.

We frequently use the following property which results from the use of DFS: every edge of $Q$ joins two vertices in $T$ one of which is an ancestor of the other (and in particular, joins two vertices in the same component of $T$). To see this, let $vw$ be an edge of $Q$, and suppose without loss of generality that $v$ was added to the stack first. If $w$ was added to the stack before $u$ was removed, then $v$ is an ancestor of $w$, since the vertices on the stack always form a path in $T$ (which respects orientations). If not, then $w$ must have remained in $U$ until $v$ was removed from the stack; however, this is impossible since the edge $vw$ was not revealed, and $v$ cannot have left the stack while an unrevealed edge between $v$ and $U$ existed. (See \cite[Lemma 2]{R}.)

Note that we are done provided there is a set $R\subseteq V(T)$ satisfying the following:
\begin{equation}\sum_{v\in R}\bigl|\{u:uv\in Q, (1-20\ep)k^2\leq d_T(u,v)<\infty\}\bigr|\geq \ep k,\label{one}\end{equation}
where $d_T(u,v)$ is the distance in $T$, since then w.h.p.\ at least one of these $\ep k$ edges is present, say $uv$, and creates a cycle of length at least $(1-20\ep)k^2$ together with the path in $T$ from $u$ to $v$. Thus we assume from now on that \eqref{one} is not true for any set $R$.

The property described above means that $uv\in Q$ with $u\in V(T)$ already implies $d_T(u,v)<\infty$, and that the distance requirement in \eqref{one} only rules out some descendants and ancestors of $u$ that are too close. Note also that every ancestor of $u$ has a different distance to $u$.

A vertex is \textit{full} if it has at least $(1-\ep)k$ incident edges in $Q$, meaning that most of the edges incident with $v$ were never explored.
As the forest $T$ has at most $n-1$ edges, standard concentration inequalities show that w.h.p.\ at most $2n/p=o(kn)$ edges are revealed in the whole process; and so w.h.p.\ all but $o(n)$ vertices are {full}. We may therefore assume in what follows that all but $o(n)$ vertices are {full}.

\begin{claim}\label{cl: k2 expansion}
	For any set $A$ of $Ck$ full vertices, we have $|N_Q(A)|\geq (1- 4\ep)k^2$.
\end{claim}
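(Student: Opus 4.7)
The plan is to prove the claim by a double-counting argument that exploits $G$ being $C_4$-free (and hence that $Q\subseteq G$ is $C_4$-free) from two directions; the DFS structure of $Q$ itself is never used, only the fullness of the vertices of $A$.

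First I would lower bound the number $e_{\mathrm{out}}$ of $Q$-edges between $A$ and $N_Q(A)$. Since every vertex of $A$ is full,
\[
\sum_{v\in A} d_Q(v) \;\geq\; |A|(1-\ep)k \;=\; C(1-\ep)k^2.
\]
Edges of $Q$ inside $A$ are counted twice in this sum. As $Q$ is $C_4$-free, the Kővári--Sós--Turán bound gives $e_Q(A)\leq \tfrac12|A|^{3/2}=\tfrac12(Ck)^{3/2}$, which is $o(k^2)$ since $C$ is a constant and $k\to\infty$. Consequently, for $k$ large enough,
\[
e_{\mathrm{out}} \;\geq\; C(1-\ep)k^2 - 2e_Q(A) \;\geq\; C(1-2\ep)k^2.
\]

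Next I would apply $C_4$-freeness from the other side. For $u\in N_Q(A)$ write $d_u:=|N_Q(u)\cap A|$, so $\sum_u d_u = e_{\mathrm{out}}$. Since any two vertices of $A$ share at most one common neighbour in $G$ (hence in $Q$), double-counting pairs gives
\[
\sum_{u\in N_Q(A)}\binom{d_u}{2} \;\leq\; \binom{|A|}{2} \;\leq\; \frac{|A|^2}{2}.
\]
Combining this with the convexity bound $\sum_u d_u^2 \geq e_{\mathrm{out}}^2/|N_Q(A)|$ yields
\[
|N_Q(A)| \;\geq\; \frac{e_{\mathrm{out}}^2}{|A|^2+e_{\mathrm{out}}}.
\]
Plugging in $|A|^2=C^2k^2$, $e_{\mathrm{out}}\geq C(1-2\ep)k^2$, and $C=10/\ep$, a short computation gives
\[
|N_Q(A)| \;\geq\; \frac{(1-2\ep)^2 k^2}{1+(1-2\ep)/C} \;\geq\; (1-2\ep)^2\bigl(1-\tfrac{\ep}{10}\bigr)k^2 \;\geq\; (1-5\ep)k^2.
\]

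The main obstacle is to make sure the $(1-\ep)$ slack inherited from the definition of a full vertex is not eaten up by the two sources of loss, namely the edges of $Q$ inside $A$ and the slack in Cauchy--Schwarz. The choice $C=10/\ep$ is tuned precisely for this: it makes $(1-2\ep)/C=O(\ep^2)$, so that the Cauchy--Schwarz step only loses a $1-O(\ep)$ factor, while the trivial bound $|A|^{3/2}=(Ck)^{3/2}=o(k^2)$ (valid because $C$ is a constant) shows the internal edges are negligible. Everything else is routine algebra.
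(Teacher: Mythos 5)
Your proof is correct and follows essentially the same route as the paper's: lower bound the number of $Q$-edges from $A$ to $B=N_Q(A)$ via fullness and the $C_4$-freeness bound on $e_Q(A)$, then use $C_4$-freeness again through the codegree inequality $\sum_{u\in B}\binom{d_u}{2}\le\binom{|A|}{2}$ and convexity (your Cauchy--Schwarz step is the same as the paper's convexity of $\binom{x}{2}$) to extract a lower bound on $|B|$. The arithmetic with $C=10/\ep$ works out as you describe, so there is no gap.
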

\begin{proof}
	Consider the bipartite graph $H=Q[A,B]$ consisting of the unrevealed edges between $A$ and $B$ where $B=N_Q(A)$.
	Note that $G[A]$ is a $C_4$-free graph with $Ck$ vertices, hence by standard bounds on $\ex(Ck,C_4)$, e.g.~\cite{Rei}, it contains at most $(Ck)^{1.5} < \ep^2 k^2$ edges for $k$ sufficiently large.
	Then, as the vertices in $A$ are full, $H$ contains at least $(1-\ep-\ep^2)C k^2$ edges.
	
	If $\sum_{v\in B} \binom{d_H(v)}{2}>\binom{|A|}{2} = \binom{Ck}{2}$, then there exists a pair of vertices in $A$ having two common neighbours, a contradiction to the $C_4$-freeness of $G$. Hence, by convexity of the function $f(x)=~\binom{x}{2}$, we have
	\[\binom{Ck}{2} \geq \sum_{v\in B} \binom{d_H(v)}{2} \geq  |B| \binom{(1-\ep-\ep^2)Ck^2/|B|}{2} \geq (1-3\ep)\Big(\frac{C^2k^4}{2|B|} - \frac{Ck^2}{2} \Big).\]
	As $C>10/\ep$, this yields that $|B|\geq (1-3\ep)(1- \frac{1}{C+1})k^2 \geq (1-4\ep)k^2$.
\end{proof}

We say that a vertex is \textit{poor} if it has at most $\ep k^2$ descendants, and \emph{rich} otherwise. We wish to show that at most $o(n)$ vertices are poor. In \cite{R} where we aim for a cycle of length $(1-o(1))k$, and the definition of poor and the condition \eqref{one} are adjusted appropriately by replacing $k^2$ with $k$, this is immediate, since if $v$ is both poor and full then $\{v\}$ satisfies the equivalent of \eqref{one} (at most $\ep k$ incident edges are not in $Q$, at most $\ep k$ go to descendants, and so the remainder go to ancestors, of which at most $20\ep k$ are too close). However, this does not translate to our setting. Consequently establishing that there are few poor vertices is the main difficulty in extending the proof.

\begin{lemma}\label{lem:no-useful-set}If \eqref{one} does not hold for any set $R$, then $o(n)$ vertices are poor.\end{lemma}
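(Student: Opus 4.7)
We argue by contradiction. Suppose $\Omega(n)$ vertices are poor; since only $o(n)$ vertices fail to be full, we may restrict to the $\Omega(n)$ vertices that are simultaneously poor and full. The key structural observation is that whenever $v$ is poor, the entire subtree $D(v)$ has at most $\ep k^2$ vertices and hence consists entirely of poor vertices. Let $M$ denote the set of \emph{topmost poor vertices} -- poor vertices whose parent in $T$ is rich, or which are roots of the DFS forest. Then $M$ is automatically an antichain in the ancestor-descendant order, and the subtrees $\{D(v):v\in M\}$ disjointly partition the poor vertex set into blocks of size at most $\ep k^2$. Consequently $|M|\ge |\text{poor}|/(\ep k^2)=\Omega(n/k^2)$, which exceeds $Ck$ in our regime (using $n\ge \Omega(k^2)$ from $C_4$-freeness of $G$, together with $k\to\infty$).

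Extract an antichain $A\subseteq M$ consisting of $Ck$ poor full vertices, and apply Claim~\ref{cl: k2 expansion} to obtain $|N_Q(A)|\ge (1-5\ep)k^2$. The assumption that \eqref{one} fails on $R=A$ yields $\sum_{v\in A}|L(v)|<\ep k$, so at least $(1-6\ep)k^2$ of the $Q$-neighbors of $A$ are reached via short $Q$-edges (tree distance less than $L=(1-20\ep)k^2$). Because each $D(v)$ has diameter at most $|D(v)|\le \ep k^2<L$, every descendant-type short neighbor already lies in $D(A)=\bigcup_{v\in A}D(v)$, and antichain disjointness gives $|D(A)|\le Ck\cdot \ep k^2$. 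Thus a very large number of distinct short $Q$-neighbors of $A$ must land in $\bigcup_{v\in A}\text{ShortAnc}(v)$, the union of the final $L$ ancestors of each $v\in A$.

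The main obstacle is deriving a contradiction from this last step: the naive vertex bound $|\bigcup_{v\in A}\text{ShortAnc}(v)|\le |A|\cdot L$ is far too weak, of order $k^3$. The plan is to pivot from counting vertices to counting $Q$-edges, and to apply the $C_4$-freeness of $G$ through the codegree inequality $\sum_{u}\binom{d_{Q,A}(u)}{2}\le \binom{|A|}{2}$ (the same mechanism driving Claim~\ref{cl: k2 expansion}), but now restricted to the short-ancestor bipartite graph between $A$ and $\bigcup_{v\in A}\text{ShortAnc}(v)$. Each ancestor $u$'s $Q$-degree into $A$ is further constrained because those neighbours must lie among $u$'s descendants in $T$, and $A$ is an antichain of limited size within that subtree. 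Calibrated against the $(1-6\ep)k^2$ lower bound on short neighbours surviving after subtracting long edges and the descendant contribution, this codegree plus tree-structure analysis should yield $|N_Q(A)\cap \text{ShortAnc}(A)|<(1-6\ep)k^2-|D(A)|$, producing the required contradiction. This refined counting is the technical heart of the lemma; the antichain extraction and the direct application of Claim~\ref{cl: k2 expansion} are routine in comparison.
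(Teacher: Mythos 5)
There is a genuine gap here, and it is not just the admitted ``technical heart''; two earlier steps are already wrong. First, the bound $|M|\ge\Omega(n/k^2)\ge Ck$ requires $n=\Omega(k^3)$, but $C_4$-freeness only gives $n=\Omega(k^2)$. The paper itself observes that there are $C_4$-free graphs with minimum degree $k$ and order $(1+o(1))k^2$, so $n=\Theta(k^2)$ is exactly the critical regime, and there $|M|$ can be $O(1)$. (You also silently assume $M$ \emph{contains} $Ck$ full vertices, but the ``topmost poor'' layer need not overlap the full set even if the poor set does.) Second, the assertion that ``antichain disjointness gives $|D(A)|\le Ck\cdot\ep k^2$, thus a very large number of short $Q$-neighbours must land in $\bigcup_v\text{ShortAnc}(v)$'' is a non sequitur: $Ck\cdot\ep k^2=\Theta(k^3)$ is far \emph{larger} than the $(1-5\ep)k^2$ vertices of $N_Q(A)$ you are trying to place, so the descendant side can absorb all of them and nothing is forced toward the ancestors.

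The structural mistake is that an arbitrary antichain $A$ of topmost poor vertices has no common ancestor, so the ``short ancestors'' of different $v\in A$ lie on different paths and cannot be controlled by the at-most-one-ancestor-per-distance argument. The paper's proof instead works with a set $R(W)$ of size between $2Ck$ and $\ep k^2$ obtained from descendants of children $W$ of a \emph{single} vertex $v$; then $|R(W)|\le\ep k^2$ makes the in-$R(W)$ contribution small, and all out-of-$R(W)$ neighbours via $Q$-edges are ancestors of the one vertex $v$, so $(1-6\ep)k^2$ of them sit on a single ancestor path and at least $\ep k^2$ must be at distance $\ge(1-7\ep)k^2$, triggering \eqref{one}. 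What remains after this observation is a delicate case analysis (rich vertices with all-poor children, rich vertices with mixed children, and a path-following charging argument bounding the leftover small groups $P_v$) that has no analogue in your sketch. To repair your proof you would need to group the poor vertices by a common ancestor \emph{and} handle the residual groups smaller than $2Ck$, which is exactly the content of the paper's cases 1 and 2 and the disjoint-$Z_y$ charging. As written, the proposal does not establish the lemma.
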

\begin{proof}
	Let $W$ be a subset of children of some vertex $v$ and write $R(W)=\bigcup_{w\in W}D(w)$. Suppose $2Ck\leq\abs{R(W)}\leq\ep k^2$. If some set $S$ of at least $Ck$ vertices in $R(W)$ are full, then by Claim~\ref{cl: k2 expansion}, we may choose $(1-4\ep)k^2$ neighbours of vertices in $S$ via edges of $Q$. Recall that each edge in $Q$ goes to a descendant or ancestor, so each of these neighbours is either in $R(W)$ or is an ancestor of $v$. However, at least $(1-5\ep)k^2$ of these neighbours are not in $R(W)$ and must be ancestors of $v$; since $v$ has at most one ancestor at each distance, at least $\ep k^2$ of them are at distance at least $(1-6\ep)k^2$ from $R(W)$, and so \eqref{one} holds for $R(W)$. Thus, for a vertex $v\in V(T)$ and a subset $W$ of children of $v$ satisfying $2Ck\leq\abs{R(W)}\leq\ep k^2$, at most half of the vertices in $R(W)$ are full.

	Write $\mathcal P$ for the set of poor vertices, and $\mathcal F$ for the set of full vertices. We divide $\mathcal P$ into groups, according to their nearest rich ancestor. However, there may be some poor vertices with no rich ancestor, corresponding to small components of $T$; we deal with these separately. Write $\mathcal P_v$ for the set of poor vertices whose nearest rich ancestor is $v$. Notice that $\mathcal P_v=R(W_v^{\mathrm{poor}})$, where $W_v^{\mathrm{poor}}$ is the set of poor children of $v$. Write $A$ for the set of vertices $v$ with $\mathcal P_v\neq\varnothing$. Finally, write $\mathcal P^*$ for the set of poor vertices with no rich ancestor.
	
	First, note that $\mathcal P^*$ consists of all vertices in components of $T$ of order at most $\ep k^2$. Let $X$ be the vertices of some component of $T$ having order $\ell\leq\ep k^2$. Since $G[X]$ is $C_4$-free, it contains at most $\ell^{1.5}\leq\ep^{0.5}k\ell$ edges. Suppose $X$ contains at least $3\ell/4$ full vertices. Then, since any edges of $Q$ meeting $X$ are in $G[X]$, $G[X]$ has at least $3(1-\ep)k\ell/8$ edges, a contradiction since $\ep<1/10$. Consequently at least one quarter of the vertices in any such component, and hence of $\mathcal P^*$, are not full. Since there are $o(n)$ such vertices, $\abs{\mathcal P^*}=o(n)$.
	
	We now split $A$ into two parts, which we deal with in different ways. Set
	\[A_1=\{v\in A:\abs{\mathcal P_v\cap\mathcal F}\leq 3\abs{\mathcal P_v}/4\} \text{ and } A_2 = A\setminus A_1.\]
	Recall that we may assume all but at most $o(n)$ vertices are full. Since at least one quarter of vertices in $\bigcup_{v\in A_1}\mathcal P_v$ are not full, it follows that $\bigl|\bigcup_{v\in A_1}\mathcal P_v\bigr|=o(n)$. Thus it suffices to show that
	$\bigl|\bigcup_{v\in A_2}\mathcal P_v\bigr|=o(n)$.
	
	Suppose $v\in A$ satisfies $\abs{\mathcal P_v}\geq 2Ck$. Then we may divide $W_v^{\mathrm{poor}}$ into disjoint subsets $W_1,\ldots,W_r,L$ such that each of $R(W_1),\ldots,R(W_r)$ have size between $2Ck$ and $\ep k^2$ and $R(L)$ has size less than $2Ck$, for some $r\geq 1$. It follows that at most half of the vertices in $R(W_i)$ are full for each $i$, and since $r\geq 1$ and $\abs{R(L)}<\abs{R(W_1)}$, at most three quarters of the vertices in $\mathcal P_v$ are full. Thus $v\in A_1$. In particular, this is the case for any vertex $v$ which is rich but has no rich children.
	
	In order to show that $\bigl|\bigcup_{v\in A_2}\mathcal P_v\bigr|=o(n)$, we will associate each $y\in\bigcup_{v\in A_2}\mathcal P_v$ with a set $Z_y$ of size $\floor{\ep k/(4C)}=\omega(1)$, ensuring that all of these sets are disjoint. Since the total size of all sets $Z_y$ is at most $n$, it will follow that $\bigl|\bigcup_{v\in A_2}\mathcal P_v\bigr|\leq n/\floor{\ep k/(4C)}=o(n)$.
	
	We will construct the sets $Z_y$ in several stages. We let $Y_0=A_2$ and in each stage $i$ we will choose a subset $X_i\subseteq Y_{i-1}$, and construct $Z_y$ for each $y\in\bigcup_{v\in X_i}\mathcal P_v$. Setting $Y_i=A_2\setminus(\bigcup_{j<i}X_j)$ to be the remaining vertices in $A_2$ after $i-1$ stages, we continue until $Y_i=\varnothing$.
	
	In stage $i$, choose $v_i\in Y_i$ as close to the root of its component as possible, so that $u\not\in Y_i$ for each ancestor $u$ of $v_i$. Define a path $P_i$, starting at $v_i$ and proceeding downwards, using only rich vertices, until one of the following is satisfied:
	\begin{enumerate}
		\item\label{enough} The total size of $\bigcup_{w\in P_i\cap Y_i}\mathcal P_w$ is at least $2Ck$, or
		\item\label{stuck} the last vertex on the path has no rich children.
	\end{enumerate}
	Clearly it is possible to construct such a path, since so long as neither 1 nor 2 is satisfied we can extend the path by adding a rich child of the last vertex. Write $x_i$ for the last vertex of $P_i$. We then choose $X_i$ to be the set $P_i\cap Y_i$.
	
	Suppose \ref{enough} is satisfied. In this case, the last vertex added to the path must be in $Y_i\subseteq A_2$. Since every vertex $w\in A_2$ satisfies $\abs{\mathcal P_w}\leq 2Ck$, we must have $2Ck\leq |\bigcup_{w\in X_i}\mathcal P_w| \leq 4Ck$. Furthermore, since $X_i\subseteq Y_i\subseteq A_2$, at least three quarters of the vertices in $\bigcup_{w\in X_i}\mathcal P_w$ are full. Consequently, Claim~\ref{cl: k2 expansion} ensures that there are at least $(1-4\ep)k^2$ distinct vertices adjacent to $\bigcup_{w\in X_i}\mathcal P_w$ by unrevealed edges. Since every unrevealed edge from a vertex goes to an ancestor or descendant, all such vertices must be either in $\bigcup_{w\in X_i}\mathcal P_w$, or on $P_i$, or ancestors of $v_i$. If $\abs{P_i}\leq \ep k^2$ then at least $(1-5\ep)k^2-4Ck\leq(1-6\ep)k^2$ of the vertices must be ancestors of $v_i$ (for $k$ sufficiently large). Of these, at least $14\ep k^2$ must be at least at distance $(1-20\ep)k^2$ from $v_i$ (since it has at most one ancestor at each distance), and so $\bigcup_{w\in X_i}\mathcal P_w$ satisfies \eqref{one}, a contradiction. Thus $\abs{P_i}\geq \ep k^2$. We may therefore choose disjoint sets $Z_y\subseteq P_i$ of size $\floor{\ep k/(4C)}$ for each $y\in \bigcup_{w\in X_i}\mathcal P_w$.
	
	Alternatively, suppose \ref{enough} is not satisfied, and so \ref{stuck} is satisfied and  $\bigl|\bigcup_{w\in X_i}\mathcal P_w\bigr|<2Ck$. Note that, since $x_i$ is rich but has only poor children, it is in $A_1$ and so not in $Y_i$. Also we have $|D(x_i)|\geq \ep k^2$. We may therefore choose disjoint sets $Z_y\subseteq D(x_i)$ of size $\floor{\ep k/(4C)}$ for each $y\in \bigcup_{w\in X_i}\mathcal P_w$.
	
	We now proceed to stage $i+1$, and continue in this manner until we reach some stage $j$ with $Y_j=\varnothing$; since $Y_i$ decreases at each stage, this eventually happens. It only remains to show that the sets $Z_y$ chosen in different stages are disjoint. Each such set constructed in stage $i$ is either chosen from $P_i$, in which case it consists only of rich vertices, or from $D(x_i)$, in which case it consists only of poor vertices. It suffices to show that the paths $P_i$ are disjoint, since then the rich sets chosen in different stages come from disjoint paths, and the poor sets chosen in different stages have different nearest rich ancestors. Suppose this is not the case, so that $w\in P_i\cap P_j$ for some $i<j$. Then, since both $v_i$ and $v_j$ are ancestors of $w$, we must have that either $v_i$ is an ancestor of $v_j$ or vice versa. Also, we have $v_i\in Y_i$ and $v_j\in Y_j\subset Y_i$. As we have chosen $v_i\in Y_i$ as close to the root of its component as possible, we know that $v_j$ cannot be an ancestor of $v_i$. However, as $w\in P_i$, if $v_i$ is an ancestor of $v_j$ it follows that $v_j\in P_i$, and hence $v_j\in X_i$, a contradiction since $v_j\in Y_j$.
	
	This completes the proof that the sets $Z_y$ for $y\in \bigcup_{v\in A_2}\mathcal P_v$ are disjoint. Since each has size $\omega(1)$, it follows that $\bigl|\bigcup_{v\in A_2}\mathcal P_v\bigr|=o(n)$. Thus $\abs{\mathcal P}=o(n)$, as required.\end{proof}

A path in a rooted tree is \emph{vertical} if one of its endpoints is a descendant of the other.
Define a vertex $v\in V(T)$ to be \textit{light} if $\bigl|D_{\leq(1-10\ep)k^2}(v)\bigr|\leq (1-9\ep)k^2$, where $D_{\le i}(v)\subseteq D(v)$ are the descendants within distance $i$ of $v$. If a vertex $v\in V(T)$ is not light, we call it \textit{heavy}.
Let $\mathcal H$ be the set of heavy vertices. The proof of the following lemma, which we include for completeness, is the same as \cite[Lemmas 5, 6]{R} up to slight changes in the parameters.
\begin{lemma}\label{lem:long-path}Suppose that $T$ contains $o(n)$ poor vertices and $Y\subseteq V(T)$ satisfies $\abs{Y}=o(n)$. Then for $k$ sufficiently large, $T$ contains a vertical path $P$ of length $2C k^2$, containing at most $\ep^2 k^2$ vertices in $Y\cup \mathcal H$.\end{lemma}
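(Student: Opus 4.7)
I would mirror the two-step argument of Riordan~\cite[Lemmas 5, 6]{R}, adjusting only the parameters. Set $L := 2Ck^2$ as the target length. First, I would establish that $T$ contains many vertical paths of length $L$. Since $T$ has only $o(n)$ poor vertices and every leaf $\ell$ is poor (as $|D(\ell)|=1\le \ep k^2$), $T$ has $o(n)$ leaves. On the DFS side, the stack always induces a path in $G_p$, so the depth of $T$ matches the maximum stack size over the run. Combining this with Claim~\ref{cl: k2 expansion}, which guarantees many unrevealed neighbours for any $Ck$-set of full vertices, a standard DFS exploration shows that the stack repeatedly reaches size $\Omega(k^2)$, producing a family $\cF$ of at least $\Omega(n)$ vertices $u$ with $\mathrm{depth}(u) \ge L$; each such $u$ yields a vertical path $P_u$ of length $L$.

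Next, to find some $P_u \in \cF$ meeting $Y \cup H$ in few vertices, I would average the incidence
\[\sum_{u \in \cF} |P_u \cap (Y \cup H)| = \sum_{v \in Y \cup H} |\{u \in \cF : v \in P_u\}|.\]
For $v \in Y$, we have $|\{u \in \cF : v \in P_u\}| \le |D_{\le L}(v)| \le n$, so the $Y$-contribution is at most $|Y|\cdot n = o(n^2)$, giving an average of $o(n)$ per path, comfortably below $\ep^2 k^2/2$.

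The main obstacle is bounding the $H$-contribution, since the non-light condition gives only a \emph{lower} bound on close descendants and the naive estimate $|\{u : v \in P_u\}| \le |D_{\le L}(v)|$ is insufficient. Following Riordan, I would instead bound the number $m$ of $H$-vertices that can lie on any single vertical path of length $L$: if $v_1,\ldots,v_m$ are the $H$-vertices along $P$ ordered by depth, then carefully excising from each $D_{\le (1-10\ep)k^2}(v_i)$ its overlap with $D(v_{i+1})$ leaves pairwise disjoint ``fresh'' close-descendant regions, each of size $\Omega(\ep k^2)$; comparison with $n$ then forces $m = O(n/(\ep k^2))$. Plugging this back into the averaging shows that some $P_u \in \cF$ contains at most $\ep^2 k^2$ vertices of $Y \cup H$ in total. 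Pinning down the overlap accounting with the right numerical tolerances is the technical heart of the argument.
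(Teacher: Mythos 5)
The central gap is a scale mismatch in the averaging. You bound the $Y$-contribution by $|Y|\cdot n = o(n^2)$ and divide by $|\cF|=\Omega(n)$ to conclude an average of $o(n)$ per path, and you call this ``comfortably below $\ep^2 k^2/2$''. But here $n=|G|$ and a $C_4$-free graph of minimum degree $k$ can have $n$ arbitrarily large compared with $k$; in general $n\gg k^4$, so $o(n)$ is not below $\ep^2 k^2$. The same problem kills your $H$-bound: even if the claimed estimate $m=O(n/(\ep k^2))$ were correct, it is only at most $\ep^2 k^2$ when $n=O(\ep^3 k^4)$, which is false in general. So the averaging as written cannot yield the lemma.

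The ``fresh regions'' step is also not sound as stated. You take consecutive $H$-vertices $v_i,v_{i+1}$ on the path and claim $D_{\le(1-10\ep)k^2}(v_i)\setminus D(v_{i+1})$ has size $\Omega(\ep k^2)$. But $v_{i+1}$ may be the child of $v_i$: being non-light only forces $|D_{\le(1-10\ep)k^2}(v_i)|>(1-9\ep)k^2$, and almost all of that set can sit inside $D(v_{i+1})$, so the excised ``fresh'' part can have size $1$. Disjointness and the $\Omega(\ep k^2)$ lower bound only hold along a subsequence of $H$-vertices spaced $\ge (1-10\ep)k^2$ apart, and that controls only $O(L/k^2)=O(1)$ of the $H$-vertices on a length-$L$ segment, leaving the bulk of them unbounded. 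Finally, the first step (invoking Claim~\ref{cl: k2 expansion} and probabilistic DFS behaviour to get $\Omega(n)$ deep vertices) is off-track for this lemma: the statement is a deterministic assertion about the rooted forest $T$ given only that it has $o(n)$ poor vertices and $|Y|=o(n)$; the paper's route (after Riordan's Lemmas~5--6) is purely combinatorial, exploiting that $o(n)$ poor vertices forces $o(n)$ leaves and hence $o(n)$ branching points, and that a non-light vertex forces a sizeable side-branching within $(1-10\ep)k^2$ below it; none of Claim~\ref{cl: k2 expansion} is needed here.
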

\begin{proof}Define the \textit{height} of a vertex to be the maximum distance to a descendant. We first show that almost all vertices are at height at least $\ep^{-2}k^2$.

For each rich vertex $v$ of height less than $\ep^{-2}k^2$, let $S(v)$ be a set of $\lceil \ep k^2\rceil$ descendants of $v$ with total distance from $v$ as large as possible. Notice that this implies each $w\in S(v)$ has at most $\lceil \ep k^2\rceil-1$ descendants, so is poor. We count pairs $(v,w)$ with $w\in S(v)$; since each such pair has $w$ being one of the $o(n)$ poor vertices, and $v$ being one of the $\ep^{-2}k^2$ lowest ancestors of $w$, there are at most $\ep^{-2} k^2o(n)$ pairs. However, each rich vertex of height less than $\ep^{-2}k^2$ is in at least $\ep k^2$ pairs, so there are at most $\ep^{-3}o(n)+o(n)=o(n)$ vertices of height less than $\ep^{-2}k^2$.

We next show that there are few heavy vertices. We count pairs $(u,v)$ of distinct vertices where $u$ is an ancestor of $v$ at distance at most $(1-10\ep)k^2$. Since each vertex has at most one ancestor at each distance, there are at most $(1-10\ep)k^2n$ pairs. Since all but $o(n)$ vertices are of height at least $\ep^{-2}k^2>(1-10\ep)k^2$, and so are the first vertex in at least $(1-10\ep)k^2$ pairs, and since each heavy vertex is the first vertex in at least $(1-9\ep)k^2$ pairs, we have
$$(1-10\ep)k^2 n \geq (1-10 \ep)k^2(n-o(n)-|\mathcal{H}|)+ (1-9\ep)k^2 |\mathcal{H}|,$$
implying $(1-10\ep)k^2o(n)\geq \ep k^2\abs{\mathcal H}$, and so $\abs{\mathcal H}=o(n)$.

Finally, consider the pairs $(u,v)$ where $v\in Y\cup \mathcal H$ and $v\in D_{\le \ep^{-2}k^2}(u)$. Since each vertex $v\in Y\cup \mathcal H$ is the second vertex in at most $\ep^{-2}k^2$ pairs, and $\abs{Y\cup\mathcal H}=o(n)$, there are $o(k^2n)$ pairs. Therefore at most $o(n)$ vertices $u$ appear in more than $\ep^2k^2$ pairs as a first entry, and as shown above at most $o(n)$ vertices have height less than $\ep^{-2}k^2$. Choosing a vertex $u$ in neither of these categories, there exists a vertical path of length $\lceil\ep^{-2}k^2\rceil$ with top vertex $u$, and any such path contains at most $\ep^2k^2$ vertices in $Y\cup\mathcal H$, as required.
\end{proof}
We are now ready to complete the proof of Theorem \ref{thm:C4-free}. We are done if any set satisfies \eqref{one}, so assume not. Then Lemmas \ref{lem:no-useful-set} and \ref{lem:long-path} ensure the long vertical path $P$ described above exists.
Write $Z$ for the set of vertices on $P$ which are both full and light. We order $Z$ according to height on the path, and will consider blocks of $Ck$ consecutive vertices of $Z$ in this ordering. By Lemma \ref{lem:long-path}, there are at most $\ep^2k^2$ vertices on the path which are not in $Z$, so the total distance on the path between the top and bottom vertices of any such block is at most $\ep^2k^2+Ck<\ep k^2$. By Claim~\ref{cl: k2 expansion}, any block $A$ satisfies $|N_Q(A)|\geq (1-4\ep)k^2$.

Fix some block $A$, and let $u$ and $v$ be the highest and lowest vertices of that block respectively. Recall that every vertex in $N_Q(A)$ is either an ancestor or a descendant of its neighbour in $A$, and hence either an ancestor or a descendant of $u$. Since $u$ is light, it has at most $(1-9\ep)k^2$ descendants within distance $(1-10\ep)k^2$, hence $|N_Q(A)\cap D_{\leq (1-10\ep)k^2}(u)|\leq (1-9\ep)k^2$.

We also have $|N_Q(A)\cap (D(v)\setminus D_{\leq (1-10\ep)k^2}(u))|\leq \ep k$. Indeed, if not, let $R$ be a set of at least $\ep k$ such vertices. Then the vertices in $R$ are at distance at least $(1-10\ep)k^2$ from $u$ and every vertex in $A$ is within distance $\ep k^2$ of $u$, so every edge $uv$ of $Q$ between $R$ and $A$ satisfy $d_T(uv) \geq (1-11\ep)k^2$, a contradiction to \eqref{one}.
Hence, we have $|N_Q(A)\cap D(u)| \leq (1-9\ep)k^2 + \ep k$.
As $|N_Q(A)|\geq (1-5\ep)k^2$, at least $4\ep k^2-\ep k$ neighbours of vertices in $A$ are ancestors of its highest vertex $u$.

Taking $V_0$ to be the bottom $Ck$ vertices of $Z$ we know that, for $k$ sufficiently large, these have at least $4\ep k^2-2\ep k>\ep k$ neighbours at least distance $4\ep k^2-2\ep k\geq  3\ep k^2$ above the highest vertex of $V_0$, so w.h.p.\ we can find a $v_0\in V_0$ and $u_0$ at least this distance above, connected by an edge of $Q$ which is present in $G_p$. Then we choose $V_1$ to be the highest $Ck$ vertices in $Z$ below $u_0$ and continue. Note that those $Ck$ vertices are disjoint from $V_0$ as $3\ep k^2 > Ck + \ep^2 k^2$.
Note that we go up at least $3\ep k^2$ steps from the top vertex of $V_0$ to $u_0$ and down at most $\ep^2 k^2$  steps from $u_0$ to the top  of $V_1$. Since $0<\ep<1/10$ and $d_T(v_0,u_0)<k^2$ (for otherwise we have a length-$k^2$ cycle), and the path $P$ has length $2C k^2$, w.h.p.\ we may continue in this way to find overlapping `chords' $v_iu_i$ for $0\le i \le C$. Since $d_T(u_i,v_{i+2})\ge 3\ep k^2-2\ep^2 k^2-Ck>\ep k^2$, w.h.p.\ there is a cycle of length at least $C\ep k^2 \geq k^2$ consisting of these chords together with the sections of the path $v_0\cdots v_1$ and $u_i\cdots v_{i+2}$ for $0\le i \le C-2$, and $u_{C-1}\cdots u_{C}$. See Figure \ref{fig:cycle-from-path} for an illustration.
\begin{figure}[ht]
\centering
\begin{tikzpicture}
\draw[->] (0,0) -- (10.2,0) node[anchor=west]{$P$};
\draw [decoration={brace, mirror}, decorate] (0,-.1) -- node [anchor=north]{$V_0$} (1,-.1);
\filldraw (0.3,0) circle (0.05) node [anchor=south east] {$v_0$};
\filldraw (2.55,0) circle (0.05) node [anchor=south west] {$u_0$};
\draw[very thick] (0.3,0) to[out=60, in=120] (2.55,0);
\draw [decoration={brace, mirror}, decorate] (1.5,-.1) -- node [anchor=north]{$V_1$} (2.5,-.1);
\draw[very thick] (0.3,0) -- (2,0);
\filldraw (2,0) circle (0.05) node [anchor=south east] {$v_1$};
\filldraw (4.05,0) circle (0.05) node [anchor=south west] {$u_1$};
\draw[very thick] (2,0) to[out=60, in=120] (4.05,0);
\draw [decoration={brace, mirror}, decorate] (3,-.1) -- node [anchor=north]{$V_2$} (4,-.1);
\draw[very thick] (2.55,0) -- (3.8,0);
\filldraw (3.8,0) circle (0.05) node [anchor=south east] {$v_2$};
\filldraw (6.05,0) circle (0.05) node [anchor=south west] {$u_2$};
\draw[very thick] (3.8,0) to[out=60, in=120] (6.05,0);
\draw [decoration={brace, mirror}, decorate] (5,-.1) -- node [anchor=north]{$V_3$} (6,-.1);
\draw[very thick] (4.05,0) -- (5.2,0);
\filldraw (5.2,0) circle (0.05) node [anchor=south east] {$v_3$};
\filldraw (7.75,0) circle (0.05) node [anchor=south west] {$u_3$};
\draw[very thick] (5.2,0) to[out=60, in=120] (7.75,0);
\draw [decoration={brace, mirror}, decorate] (6.7,-.1) -- node [anchor=north]{$V_4$} (7.7,-.1);
\draw[very thick] (6.05,0) -- (7.2,0);
\filldraw (7.2,0) circle (0.05) node [anchor=south east] {$v_4$};
\filldraw (9.55,0) circle (0.05) node [anchor=south west] {$u_4$};
\draw[very thick] (7.2,0) to[out=60, in=120] (9.55,0);
\draw[very thick] (7.75,0) -- (9.55,0);
\end{tikzpicture}
\caption{An example cycle (shown in bold) constructed from the vertical path $P$, drawn horizontally for ease of presentation (higher vertices are positioned further to the right).}\label{fig:cycle-from-path}
\end{figure}
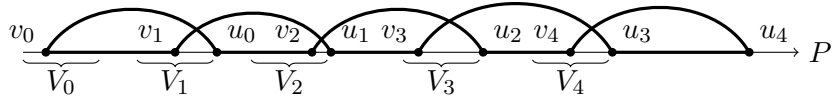

\subsection{Long cycles in random subgraphs of hypercubes}

To prove Theorem \ref{thm: long-cycle-in-hypercube2}, we use concentration of the size of the giant component to show that w.h.p.\ there is no small separators. This idea is not new and appeared earlier in the work of Krivelevich, Lubetzky and Sudakov~\cite{KrLS}. To carry out this argument, we need a result relating separability of graphs to separator size; we first give the necessary definitions.

\begin{defn}
Given a graph $G=(V, E)$ on $n$ vertices, a vertex set $S\subseteq V$ is called a \emph{separator} if there is a partition $V=A\cup B\cup S$ of the vertex set of $G$ such that $G$ has no edges between $A$ and $B$, and $|A|, |B|\le 2n/3$. 	
\end{defn}

\begin{defn}
Let $s,t$ be positive integers. A graph $G$ is \emph{$(s,t)$-separable} if there exists a vertex subset $S\subseteq V(G)$ such that $|S|\le s$ and every component of $G-S$ has at most $t$ vertices.
\end{defn}

\begin{lemma}\label{claim: separator}
Let $G$ be a graph with $n$ vertices and fix $t,r>0$. If $G$ is not $(\frac{4 n^2}{r t}, t)$-separable, then $G$ has a subgraph $H$ such that $\abs{H}\ge t$ and $H$ has no separator with size at most $\frac{1}{r}\abs{H}$.
\end{lemma}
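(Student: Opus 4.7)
The plan is to prove the contrapositive: assuming every subgraph $H \subseteq G$ with $|H| \ge t$ admits a separator of size at most $|H|/s$, I will show that $G$ itself is $(\tfrac{4n^2}{st}, t)$-separable by recursively decomposing $V(G)$ using such separators until every remaining piece has size at most $t$.

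Concretely, I would maintain a collection $\mathcal{P}$ of pairwise-disjoint pieces, starting with $\mathcal{P} = \{V(G)\}$, and an accumulating separator $S$, starting empty. While some piece $P \in \mathcal{P}$ satisfies $|P| > t$, apply the hypothesis to $G[P]$ to obtain a separator $S_P \subseteq P$ of size at most $|P|/s$ partitioning $P = A_P \cup S_P \cup B_P$ with $|A_P|, |B_P| \le \tfrac{2}{3}|P|$ and no $G[P]$-edges between $A_P$ and $B_P$. Then replace $P$ by $A_P$ and $B_P$ in $\mathcal{P}$, and adjoin $S_P$ to $S$. When the process terminates, every remaining piece has size at most $t$; moreover, by induction over the split history, no two distinct final pieces are joined by an edge of $G - S$, since any such edge would have been killed by the $S_P$ of their most recent common ancestor piece. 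Hence the components of $G - S$ lie inside these final pieces and each have size at most $t$.

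It remains to bound $|S|$, which is the heart of the argument. I would use the potential $\Phi := \sum_{P \in \mathcal{P},\, |P| > t} |P|^2$, which starts at most $n^2$ and ends at $0$. In a single split of a piece $P$ of size $p > t$, we add at most $p/s$ vertices to $S$, while the constraints $|A_P|, |B_P| \le 2p/3$ and $|A_P| + |B_P| \le p$ give $|A_P|^2 + |B_P|^2 \le (2p/3)^2 + (p/3)^2 = 5p^2/9$, so $\Phi$ drops by at least $4p^2/9$ (whether or not either new piece slips below the $t$-threshold, since dropping such a piece only decreases $\Phi$ further). The cost-to-drop ratio is therefore at most $(p/s)/(4p^2/9) = \tfrac{9}{4sp} \le \tfrac{9}{4st}$, and summing across all splits yields $|S| \le \tfrac{9 n^2}{4st} < \tfrac{4 n^2}{st}$, as required.

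The one delicate point, and the likeliest source of an off-by-constant error, is the potential-function bookkeeping — in particular checking that the $4p^2/9$ drop is valid uniformly over all splits (balanced or unbalanced, with one child dropping below $t$ or not). Verifying that the constant $9/4$ is attained and stays comfortably below $4$ is the main thing to get right; everything else is structural.
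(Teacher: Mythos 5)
Your proof is correct, and the decomposition is the same as the paper's: repeatedly peel off a small balanced separator from any piece of size exceeding $t$, so that the total removed set isolates pieces of size at most $t$. Where you genuinely depart from the paper is in how you bound the accumulated separator. The paper organises the recursion into levels $k=0,1,\ldots,\ell$ of a binary tree, bounds each level's contribution by $\left(\tfrac{4}{3}\right)^k\!\frac{n}{s}$ using the crude estimates (at most $2^k$ pieces at level $k$, each of size at most $\left(\tfrac{2}{3}\right)^k n$), caps the depth at $\ell\le\log_{3/2}(n/t)$, and sums the geometric series to land at $\tfrac{4n^2}{st}$. You instead run a single amortised charging argument against the potential $\Phi=\sum_{|P|>t}|P|^2$, showing each split pays cost at most $p/s$ while $\Phi$ drops by at least $\tfrac{4}{9}p^2$, so the cost\,/\,drop ratio is at most $\tfrac{9}{4st}$ and $|S|\le\tfrac{9n^2}{4st}<\tfrac{4n^2}{st}$. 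Your accounting is cleaner in that it does not need to synchronise the recursion into levels, it handles unbalanced split trees automatically, and it yields a sharper constant ($\tfrac{9}{4}$ versus $4$). One could also note, as a further gain your viewpoint exposes, that the paper's per-level bound $2^k\left(\tfrac{2}{3}\right)^k n$ is quite lossy (the pieces at each level are disjoint, so each $|S^k|\le n/s$ trivially), so in fact a near-linear bound in $n$ with a $\log(n/t)$ factor is available; but neither that improvement nor your constant is needed for the lemma's stated conclusion, and your bound comfortably suffices. The structural verification (that $S$ separates distinct final pieces, because any offending edge would survive inside the most recent common ancestor and contradict that split) is also correct.
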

\begin{proof}
Suppose that every subgraph $H$ of $G$ with at least $t$ vertices has a separator with size at most $\frac{1}{r}\abs{H}$. Then $G$ has a separator $S$ such that $|S|\le \frac{1}{r}\abs{G}$ and $V(G)\setminus S=X_1 \dot{\cup} X_2$ with $|X_1|,|X_2|\le \frac{2n}{3}$ and $e_G(X_1,X_2)=0$. For each $X_i$ ($i\in \{1,2\}$), if $|X_i|\geq t$, then $G[X_i]$ has a separator $S_i$ such that $|S_i|\le \frac{1}{r}|X_i|$ and $X_i\setminus  S_i=X_{i1} \dot{\cup} X_{i2}$ with $|X_{i1}|,|X_{i2}|\le \frac{2|X_i|}{3}\le (\frac{2}{3})^2n$ and $e_G(X_{i1},X_{i2})=0$. For each $X_{ij}$ ($i,j\in \{1,2\}$), if $|X_{ij}|\geq t$, then $G[X_{ij}]$ has a separator $S_{ij}$ such that $|S_{ij}|\le \frac{1}{r}|X_{ij}|$ and $X_{ij}\setminus S_{ij}=X_{ij1} \dot{\cup} X_{ij2}$ with $|X_{ij1}|,|X_{ij2}|\le \frac{2|X_{ij}|}{3}\le (\frac{2}{3})^3n$ and $e_G(X_{ij1},X_{ij2})=0$. We repeat this to obtain $S_{ijk}, X_{ijk1}, X_{ijk2}$ ($i,j,k\in \{1,2\}$) and so on. Assume that this process stops when $S_{i_1i_2i_3\ldots i_{\ell}}, X_{i_1i_2i_3\ldots i_{\ell+1}}$ are obtained, \ie each $X_{i_1i_2i_3\ldots i_{\ell+1}}$ has size less than $t$.
For each $k\leq \ell+1$ let $\mathcal{A}^k = \{ i_1\dots i_k: X_{i_1\dots i_k} \text{ is defined} \}.$

As $t\le |X_{i_1i_2i_3\ldots i_{\ell}}|\le (\frac{2}{3})^{\ell} n$, we know that $\ell \le \log_{3/2} (n/t)$. Let $S^0=S$ and for $1\le k \le \ell$, $S^{k}=\bigcup\limits_{ i_1\dots i_k \in \mathcal{A}^k}S_{i_1i_2i_3\ldots i_{k}}$. Then
\begin{align*}
|S^{k}|&\le
\sum\limits_{i_1\dots i_k \in \mathcal{A}^k}\frac{1}{r}|X_{i_1i_2i_3\ldots i_{k}}| \le 2^{k}\cdot \frac{1}{r}\cdot \Big{(}\frac{2}{3}\Big{)}^{k} n \le \Big{(}\frac{4}{3}\Big{)}^{k}\cdot \frac{ n}{r}.
\end{align*}
Let $S^{*}=\bigcup_{0\le k \le \ell}S^{k}$. Then $|S^{*}|\le 3\cdot(\frac{4}{3})^{\ell+1}\cdot\frac{ n}{r}\le \frac{4 n^2}{r t}$ and every component in $G-S^{*}$ has size less than $t$. Hence, $G$ is $(\frac{4 n^2}{r t}, t)$-separable, a contradiction.
\end{proof}

By taking $r= 4\psi(n)^3$ and $t=\frac{n}{\psi(n)}$, where $\psi(n)=n^{o(1)}$, we have the following corollary.
\begin{cor}\label{cor: separator}
If $G$ is not $(\frac{n}{\psi(n)^2},\frac{n}{\psi(n)})$-separable, then $G$ has a subgraph $H$ such that $\abs{H}\ge \frac{n}{\psi(n)}$ and $H$ has no separator with size at most $\frac{1}{4\psi(n)^3}\abs{H}$.
\end{cor}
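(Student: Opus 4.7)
The plan is to apply Lemma \ref{claim: separator} as a black box with the explicit substitution $s = 4\psi(n)^3$ and $t = n/\psi(n)$, and read off that its conclusion becomes precisely the statement of the corollary.

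Concretely, I would first check that the separability threshold $4n^2/(st)$ appearing in Lemma \ref{claim: separator} simplifies to $n/\psi(n)^2$ under this substitution:
\[
\frac{4n^2}{st} \;=\; \frac{4n^2}{4\psi(n)^3 \cdot n/\psi(n)} \;=\; \frac{n}{\psi(n)^2},
\]
so the corollary's hypothesis that $G$ is not $(n/\psi(n)^2,\, n/\psi(n))$-separable is exactly the hypothesis of Lemma \ref{claim: separator} for this choice of $s$ and $t$. The lemma then produces a subgraph $H$ with $\abs{H} \ge t = n/\psi(n)$ whose every separator has size greater than $\abs{H}/s = \abs{H}/(4\psi(n)^3)$, which is what is asserted.

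There is no genuine obstacle: the corollary is a direct parametric restatement of the lemma, packaged so that both the separator bound and the component-size bound are expressed through a single growth function $\psi(n) = n^{o(1)}$. The only thing to notice is which values of $s$ and $t$ make the quantities $4n^2/(st)$ and $1/s$ balance into the clean expressions $n/\psi(n)^2$ and $1/(4\psi(n)^3)$; once that choice is spotted, the proof is a one-line substitution. This packaged form is what will be convenient in the subsequent random-subgraph arguments for the hypercube, where one wants a single scale $\psi(n)$ controlling both the size of the subgraph and the strength of its non-separability.
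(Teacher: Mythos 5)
Your substitution $s = 4\psi(n)^3$, $t = n/\psi(n)$ into Lemma~\ref{claim: separator} is exactly what the paper does, and your arithmetic checking that $4n^2/(st)$ collapses to $n/\psi(n)^2$ is correct. This is the paper's own one-line derivation, so there is nothing to add.
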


Write $\mathcal{C}_1(G)$ for the largest component in a graph $G$. Let $\ep >0$ be fixed and sufficiently small. Set $p=(1+\ep)/m$ and $p'=(1-{\frac{\ep}{4}})p>(1+{\frac{\ep}{2}})/m$. Write $p_1=(1+{\frac{\ep}{4}})/m$ and choose $p_2\geq {\frac{\ep}{4m}}$ such that $(1-p_1)(1-p_2)=1-p'$ and $n=2^m$. We assume that $m$ is sufficiently large.
For our argument, we prove the following claim. The same result was proved by Ajtai, Koml\'os and Szemer\'edi in \cite{AKS} with a weaker bound of $1-o(1)$ on the probability. However, the bound on the probability that their argument provide is much worse than the following near-exponential bound on the probability, which is crucial for our purpose.
\begin{claim}\label{claim: largest component}
There exists $c=c(\ep)$ satisfying the following: $\mathbb{P} [|\mathcal{C}_1(Q^m_{p'})|\geq c n]\ge 1-\exp(-n/m^{14})$.
\end{claim}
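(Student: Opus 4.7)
The approach is a classical two-round sprinkling argument in the spirit of~\cite{KrLS}. Write $Q^m_{p'} = Q^m_{p_1}\cup Q^m_{p_2}$ as the independent union afforded by the identity $1-(1-p_1)(1-p_2)=p'$, with $p_1=(1+\theta)/m$ placing the first round just above the percolation threshold and $p_2\ge \theta/m$ the sprinkling probability. Round one creates a supercritical cluster structure; round two sprinkles edges to fuse the large clusters into a single giant.

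For round one, the Ajtai--Koml\'os--Szemer\'edi supercritical percolation theorem for the hypercube gives $\mathbb{E}\bigl[|\mathcal{C}_1(Q^m_{p_1})|\bigr]\ge 2c_1\cdot 2^m$ for some $c_1=c_1(\theta)>0$. Because $|\mathcal{C}_1|$ itself is not Lipschitz under single-edge flips, I would convert expectation into concentration by working with a truncated proxy---for instance the number $X$ of vertices lying in components of size in a window $[T,n/2]$ with $T=n/m^{O(1)}$---and invoke a bounded-differences concentration inequality over a partition of $E(Q^m)$ into independent blocks of polynomial-in-$m$ size. Calibrating $T$ and the block size is intended to yield $\Pr[X\le c_1 n]\le \exp(-n/m^{14})$.

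In round two, condition on $X\ge c_1 n$. Then there is a set $L$ of at most $n/T=m^{O(1)}$ components of $Q^m_{p_1}$ together covering at least $c_1 n$ vertices. By Theorem~\ref{thm: edge-isoperimetry}, every such component $C$ has $|\partial_{Q^m}C|\ge |C|\log_2(n/|C|)=\Omega(n/m^{O(1)})$ edges out in $Q^m$; sprinkling at rate $p_2=\Omega(1/m)$ therefore plants $\Omega(n/m^{O(1)})$ expected crossing edges from $C$, and a Chernoff plus union bound over the $m^{O(1)}$ components in $L$ forces all of them to merge into a single component of $Q^m_{p'}$ with the required failure probability, giving $|\mathcal{C}_1(Q^m_{p'})|\ge c_1 n$ and so $c=c_1$.

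The main obstacle is the concentration step in round one. The task is to calibrate the truncation threshold $T$, the block size and the effective Lipschitz constant simultaneously, so that both the AKS truncation tail and the bounded-differences tail fit under the target $\exp(-n/m^{14})$; this is largely bookkeeping once a suitable proxy functional is fixed, but the choice of proxy is delicate precisely because $|\mathcal{C}_1|$ can change by $\Omega(n)$ under a single-edge flip, ruling out a direct application of Azuma.
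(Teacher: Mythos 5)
Your high-level two-round sprinkling structure matches the paper, but both steps, as sketched, have gaps that are more than bookkeeping.

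In the round-one concentration step, the choice $T=n/m^{O(1)}$ for the truncation threshold is going the wrong way. With this threshold the proxy $X$ has Lipschitz constant $\Theta(T)=\Theta(n/m^{O(1)})$ under a single edge flip (a component of size just below $2T$ can split into two pieces straddling the window, or two near-threshold pieces can merge), and Azuma then yields a tail of order $\exp(-n^2/(T^2 m)) = \exp(-m^{O(1)})$, utterly inadequate against the target $\exp(-n/m^{14})$. The paper instead takes the threshold \emph{low}, $T=m^4$: the proxy $B$ is ``vertices in $Q^m_{p_1}$-components of size $\geq m^4$'', which has Lipschitz constant only $2m^4$, because any vertex whose membership in $B$ is affected by toggling $e$ must lie in a component of order $<m^4$ in $Q^m_{p_1}-e$ containing an endpoint of $e$. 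That is what makes Azuma give $\exp(-n/m^{9})$. The point of round one is not to produce a near-giant, only to show that $\Omega(n)$ vertices sit in clusters of polynomial size, and that all but $o(n/m)$ vertices are adjacent to such a cluster; the latter is a second martingale concentration the paper needs in round two and which your sketch omits.

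In round two the argument breaks down. Showing via the edge-isoperimetric inequality that every large component $C$ has $\Omega(n/m^{O(1)})$ boundary edges in $Q^m$, and that sprinkling retains $\Omega(n/m^{O(1)})$ of them, does \emph{not} imply the components in $L$ merge. Those boundary edges may all land in isolated vertices or tiny components; indeed two components of $Q^m_{p_1}$ need not be adjacent in $Q^m$ at all. You need to rule out every cut of the cluster set, not argue component-by-component, and you need short \emph{paths} rather than direct edges. The paper handles this by (i) establishing that $V\setminus N_{Q^m}[B]$ is $o(n/m)$, so that almost all of $Q^m$ is within distance one of the clusters and hence $C$-$D$ connections can be realised by paths of length at most~$3$; (ii) applying Menger together with vertex isoperimetry to the pair $(C,D)$, not to individual components, obtaining $\Omega(n/\sqrt{m})$ internally disjoint $C$-$D$ paths and then $\Omega(n/\sqrt{m})$ of length at most~$3$; and (iii) union-bounding the sprinkling failure over all $\leq 2^{n/m^4}$ ways of splitting $B$ into two unions of $Q^m_{p_1}$-components. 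Each of these three ingredients is essential and none appears in your sketch; without the enumeration over cuts in particular, there is no way to get a final failure probability of the form $\exp(-n/m^{O(1)})$.
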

\begin{proof}
We prove this in two steps. The first step (clustering) is performed in $Q^m_{p_1}$, and we deduce that w.h.p.\ $\Omega(2^m)$ vertices are contained in components of size at least $m^4$ and most of vertices are adjacent to at least one such a component. For the second step (sprinkling), we mainly follow the sprinkling process in \cite[Section 1.3]{JEMS}: add the edges of $Q^m_{p_2}$ and show that they can connect many of the clusters of size at least $\sqrt{m}$ into a giant cluster of size $\Theta(2^m)$.

\medskip\noindent\textbf{Step 1.}
Let $V=V(Q^m)$.
Let the random variable $B=B(Q^m)$ be the set of  vertices in $Q^m_{p_1}$ that belong to a component of order at least $m^4$.
By the main theorem in \cite{AKS}, there exists $c_1=c_1(\ep/12)<1/12$ such that for any $q\geq  (1+\ep/12)/m$,
\begin{align}\label{eq: AKS result}
\mathbb{P}[\mathcal{C}_1(Q^m_{q})> 12 c_1 2^m ]\ge 1- c_1.\end{align}
Since $c_1 2^m > m^4$, it follows that $\mathbb{E}[|B|]\geq 6 c_1 2^m$.

For a vertex $v\in V(Q^m)$, we can find distinct vertices $v_1,\dots, v_{\ep m/12} \in N_{Q^m}(v)$ and vertex-disjoint subhypercubes $Q_1,\dots, Q_{\ep m/12}$ of dimension $(1-\ep /12)m$ in $Q^m$ with $v_i\in Q_i$ for each $i$.

Note that conditioning on the existence of a component of size $12c_1 |Q^m|$ in $Q^m_q$, the probability that such a component contains a specific vertex $v$ is at least $12c_1$ as $Q^m$ is vertex-transitive. Hence, the equation \eqref{eq: AKS result} (with $(1-\ep /12)m$ playing the role of $m$) implies that
the vertex $v_i$ belongs to a component of size $12c_1 |Q_i| \geq m^4$ in $(Q_i)_{p_1}$ with probability at least $ 12c_1 (1-c_1) \geq c_1$.
As $Q_1,\dots, Q_{\ep m/12}$ are disjoint subgraphs of $Q^m$, those events are mutually independent.
Moreover, if one such event happens, then we have $v\in N_{Q^m}[B]$, where we write $N_{Q^m}[B] = B\cup N_{Q^m}(B)$.
Hence, we have
$$\mathbb{E}[|V\setminus N_{Q^m}[B]|] =
\sum_{v\in V} \mathbb{P}[ v \notin N_{Q^m}[B] ] \leq (1-c_1)^{\ep m/12} \cdot 2^m \leq \frac{2^{m}}{m^2}.$$

Enumerate edges of $Q^m$ as $e_1,e_2,\ldots,e_{m2^{m-1}}$; let $I_i$ be the indicator random variable that $e_i\in E(Q^m_{p_1})$ and let $\mathcal{F}_i$ be the $\sigma$-algebra generated by $(I_j)_{j\leq i}$. Consider the edge-exposure martingale $X_0,X_1,\ldots,X_n$ and $Y_1,\dots, Y_n$ with
$$X_i=\mathbb{E}[|B| : \mathcal{F}_i] \text{ and } Y_i = \mathbb{E}[ |V\setminus N_{Q^m}(B)| : \mathcal{F}_i].$$
Note that changing one $I_i$ changes $|B|$ by at most $2 m^4$ and $|N_Q[B]|$ by at most $2m^{5}$, since any vertex for which $e_i$ is critical is in a component of order less than $m^4$ in $Q^m_{p_1}-e_i$ containing exactly one endpoint of $e_i$, and such a component has at most $m^5$ neighbours in $Q^m$. Thus the martingales are $2m^4$-Lipschitz and $2m^{5}$-Lipschitz respectively, and by Azuma's inequality we have
\begin{align*}\mathbb{P}\left[|B|< 3 c_1 2^m\right]&\leq\mathbb{P}\left[|B|<\mathbb{E}[|B|] - 3 c_1 2^m\right] \leq\exp\bigg(-\frac{ 9 (c_1)^2 2^{2m}}{ 2(2m^4)^2\cdot m 2^{m-1}}\bigg)
 \leq \exp\Big(- \frac{2^m}{m^{10}}\Big),\end{align*}
  \begin{align*}
\mathbb{P}\left[ |V\setminus N_{Q^m}[B]| >  \frac{2^{m+1}}{m} \right] &\leq\mathbb{P}\left[|V\setminus N_{Q^m}[B]| >\mathbb{E}\left[|V\setminus N_{Q^m}[B]|\right] +  \frac{2^m}{m}\right] \\& \leq\exp\bigg(-\frac{  2^{2m}/m^2}{ 2(2 m^{5})^2 \cdot m2^{m-1}}\bigg)
\leq \exp\Big(- \frac{2^m}{4m^{13}}\Big).
\end{align*}

\medskip\noindent\textbf{Step 2.}  From Step 1, we have $|B|\geq 3c_1 2^m$ and $|V\setminus N_{Q^m}[B]| \leq 2^{m+1}/m$
with probability at least $1-2\exp(-2^m/4m^{13})$. We say that \emph{sprinkling fails} when these high probability events happen but $|\mathcal{C}_1(Q^m_{p_1}\cup Q^m_{p_2})|\leq  c_1 2^m$. If sprinkling fails, then we can partition $B =C\dot{\cup} D$ such that $|C|,|D|\geq  c_1 2^m$, each of $C$ and $D$ is a union of components in $Q^m_{p_1}$, and any $C$-$D$ path in $Q^m$ has an edge missing in $Q^m_{p_2}$. Since every component of $Q^m_{p_1}$ meeting $B$ has size at least $m^4$, the number of partitions meeting the second condition is at most $2^{{2^m}/m^4}$.

It follows from Harper's vertex isoperimetric inequality for the hypercube \cite{Harper} that any set $X\subset V(Q^m)$ of size at most $2^{m-1}$ satisfies $|N_{Q^m}(X)|\geq(1+o(1))|X|\sqrt{2/(\pi m)}$. Consequently, for a particular partition $C\dot{\cup} D$ with $|C|,|D|\geq c_1 2^m$ there is no $C$-$D$ separating set of size less than $\frac{c_1}{100 \sqrt{m}}\cdot 2^m$, so by Menger's theorem there exist at least this many internally vertex-disjoint $C$-$D$ paths in $Q^m$.

Take  such a collection $\mathcal{P}$ of paths with the minimum total sum of lengths.  Note that a path in $\mathcal{P}$ has at most four vertices in $N_{Q^m}[B]$.
 Indeed, if a vertex $u_i$ in the path $u_1u_2\dots u_s$ with $u_1 \in C, u_s\in D$ and $3\leq i\leq s-2$ has a neighbour $w$ in $B = C\cup D$,
then either the path $u_1\dots u_i w$ or the path $w u_i u_{i+1}\dots u_s$ can replace the path $u_1\dots u_s$ in $\mathcal{P}$ to contradict the minimality of $\mathcal{P}$.
Hence, at most $|V(Q^m)\setminus N_{Q^m}[B]| \leq 2^{m+1}/m$ paths in $\mathcal{P}$ have length at least $4$ and at least $\frac{c_1}{100 \sqrt{m}}\cdot 2^m - \frac{2^{m+1}}{m} \geq \frac{c_1}{200 \sqrt{m}}2^m$ paths have length at most $3$. Hence, the probability that all such paths have an edge missing in $Q_{p_2}^m$ is at most
\begin{align*}\Big(1-\big(\frac{\ep}{4m}\big)^{3}\Big)^{\frac{c_1 2^m}{200\sqrt{m}}}&<\exp\Big(-\frac{1}{2}\big(\frac{\ep}{4m}\big)^{3}\cdot \frac{c_1 2^m}{200\sqrt{m}}\Big) <{2^{- 2^{m+2}/m^{4}}}.
\end{align*}
Consequently the probability that sprinkling fails is at most
\[ 2^{2^m/m^4} \cdot 2^{-2^{m+2}/m^4} \leq \exp(-2^{m}/m^4).\]
By the above two steps, we obtain that
\[\mathbb{P} [|\mathcal{C}_1(Q^m_{p'})|\geq c_1 n]\ge 1- \exp\big(-2^{m}/m^{14}\big).\qedhere\]
\end{proof}

\begin{proof}[Proof of Theorem~\ref{thm: long-cycle-in-hypercube2}.]
Let $G=Q^m$. Note that $G_{p'}$ can be obtained by deleting edges in $G_p$ with probability $\ep/4$ independently. Let $\mathcal{A}$ be the event that $G_p$ is $(n/m^{16},n/m^{8})$-separable and $\mathcal{B}$ be the event that $|\mathcal{C}_1(G_{p'})|< n/m^{8}$. Assume that $\mathcal{A}$ occurs. Then we have a vertex subset $S$ with size at most $n/m^{16}$ such that every component of $G-S$ has at most $n/m^{8}$ vertices. If all edges between $S$ and $G-S$ are deleted when passing from $Q^m_p$ to $Q^m_{p'}$, then $\mathcal{B}$ happens. This deletion of all edges between $S$ and $G-S$ happens with probability at least $(\ep/4)^{|S|m}\geq(\ep/4)^{n/m^{15}}$. Hence, $\mathbb{P}[\mathcal{B}]\ge \mathbb{P}[\mathcal{A}]\cdot(\ep/4)^{n/m^{15}}$. However, $\mathbb{P}[\mathcal{B}] \le \exp(-n/m^{14})$ by Claim \ref{claim: largest component}.
 Thus we have  $\mathbb{P}[\mathcal{A}]\le \exp(-n/m^{14})\cdot(\ep/4)^{-n/m^{15}}=o(1)$.

By Corollary \ref{cor: separator}, w.h.p.\ $G_p$ has a subgraph $H$ such that $\abs{H}\ge n/m^{8}$ and $H$ has no separator with size at most $\abs{H}/(4m^{24})$. Thus we have $N_H(W)\geq \abs{H}/(4m^{24}) \geq  n/(4m^{32})$ for any $W\subseteq V(H)$ with $\abs{H}/3\leq \abs{W}\leq2\abs{H}/3$. Applying Theorem \ref{thm: long-cycle-in-locally-expander} we obtain that $H$, and so also $G_p$, has a cycle of length at least $n/(4m^{32})=2^{(1-o(1))m}$.
\end{proof}

\section{Concluding remarks}\label{sec:conc}
In this paper, we introduce the crux of a graph, corresponding to the order of the smallest dense patch of a graph, and study the `replacing average degree by crux' paradigm.  As a first example, we find in generic graphs cycles of length linear in the crux size and apply this result to address two conjectures of Long regarding long paths in subgraphs of hypercubes and Hamming graphs. As the
crux of a $C_4$-free graph is quadratic in its average degree, and the crux of a hypercube is exponential in its dimension, Theorems~\ref{thm:C4-free} and~\ref{thm: long-cycle-in-hypercube2}, on cycles in random subgraphs of $C_4$-free graphs and hypercube graphs are two more examples of this paradigm. It would be interesting to see more results of this form.

\section*{Acknowledgement}
We would like to thank Michael Krivelevich for bringing~\cite{KrLS} to our attention.
\medskip

\noindent\textbf{Note added before submission.} Theorem~\ref{thm: long-cycle-in-hypercube2} has been proved independently by Erde, Kang and Krivelevich~\cite{EKK} with a better cycle length $\Omega(\frac{2^m}{m^3\log^3m})$.



\end{document}